\newtheorem{theorem}{Theorem}[section]
\newtheorem{proposition}{Proposition}[section]
\newtheorem{lemma}[theorem]{ Lemma}
\newtheorem{corollary}[theorem]{Corollary}
\title{Growth, entropy and commutativity     
 of  algebras satisfying prescribed relations}
\author{Agata Smoktunowicz\thanks{This research was funded by ERC Advanced Grant 320974. }}
\date{  }
\begin{document}

\maketitle

\begin{abstract} In 1964, Golod and Shafarevich found that, provided that the number of relations of each degree satisfy some bounds, there exist infinitely dimensional algebras satisfying the relations. These algebras are called Golod-Shafarevich algebras. This paper provides bounds for the growth function on images of Golod-Shafarevich 
 algebras based upon the number of defining relations. This extends results from \cite{sb}, \cite{s}. 
   Lower bounds of growth for constructed algebras are also obtained, permitting the construction of algebras with various growth functions of various entropies. 
In particular, the paper answers a question by  Drensky \cite{dren} by constructing algebras with subexponential growth satisfying given relations, under mild assumption on the number of generating relations of each degree. Examples of nil algebras with neither polynomial nor exponential growth over uncountable fields are also constructed, answering a question by Zelmanov \cite{zel}. 

 Recently, several open questions concerning the commutativity of algebras satisfying a prescribed number of defining relations have arisen from the study of noncommutative singularities. Additionally, this paper solves one such question,  posed by Donovan and Wemyss in \cite{dw}.
\end{abstract}
 
{\em 2010 Mathematics subject classification:} 16P40, 16S15, 16W50, 16P90, 16R10,  16D25, 16N40, 16N20.

\noindent
{\em Key words:}
Golod-Shaferevich algebras, growth of algebras and the Gelfand-Kirillov dimension.

\section*{Introduction}\label{S0}

  The Golod-Shafarevich theorem \cite{gs} is a beautiful result in noncommutative ring theory, which shows that  Golod-Shafarevich algebras are infinite dimensional.
 The Golod-Shafarevich theorem has been used to solve many important questions, ranging from group theory and noncommutative ring theory, to number theory. For example, Golod  used it to show that there exist infinite torsion groups which are finitely generated, to solve the General Burnside Problem \cite{g}, and, in collaboration with Shafarevich,  to solve the Class Field Problem in number theory \cite{gs}. However, despite these successes,  many inspiring open questions remain. For example, Anick asked whether the converse of the Golod-Shafarevich theorem is true, and he also asked whether the minimal possible Hilbert series of an algebra defined by a given number of relations given by the Golod-Shafarevich theorem is attainable \cite{an} (Anick  managed to solve this latter question in some special cases \cite{an}). Interesting results  related to Anick's questions were obtained by Wisliceny \cite{wis}, and more recently by Iyudu and Shakarin \cite{is}, \cite{is2}. 
 
This paper explores the  growth and entropy of algebras satisfying prescribed relations. 
Information on growth often contains important information about the properties of algebras; for example, Jategoankar showed that domains with subexponential growth satisfy the Ore condition, and hence 
 have Ore rings of fractions (\cite{kl}, pp. 48), and Zhang and Stephenson found that noetherian graded algebras  cannot have exponential growth \cite{sz} (for other interesting results see \cite{br}, \cite{gz}, \cite{kl}, \cite{rowen}, \cite{sap}).
  Assumptions about the growth of algebras are often used  in studies related to   
noncommutative (projective) algebraic geometry (see for example \cite{as}, \cite{bs}, \cite{sw}). The growth of algebras provides   information about the extended centroids of the given rings \cite{smz}, \cite{bs}.
 Combinatorial methods also find applications in results related to the growth of algebras, as the growth of algebras is an analogue of word growth in combinatorics. Connections between the growth of groups and related rings have also been investigated. 
 A celebrated result of  Gromov theorem says that if G is a finitely generated group
and the group algebra $K[G]$ has finite Gelfand-Kirillov
dimension, then G is nilpotent by finite (see \cite{kl}, pp 139 ).  The growth of groups has been extensively studied, but there
are some basic open questions remaining in this area. One of the most tantalizing is  Grighorhuk's question of whether there exist groups with growth between polynomial and $exp {\sqrt {n}}$. Analogous questions can also be asked about algebras, although here the situation is completely different, and the question of which functions are growth functions of  associative algebras also remains open \cite{sb}. Related questions have been studied by Bell and Young \cite{by}, where they constructed nil algebras with subexponential growth over uncountable fields  (notice that nil rings with polynomial growth were previously constructed in \cite{ls}). However,  they did not give the lower bounds of the growth functions of the constructed algebras, and it was still possible that their algebras had polynomial growth. In Corollary \ref{C35}  we use Theorem \ref{A} to construct the  first example of a nil algebra with growth strictly between polynomial and exponential over an  arbitrary field, answering a question of Zelmanov \cite{zel}. 

  In a connection with dynamical systems, Newman, Shneider and Shalev introduced a notion of an entropy of an associative algebra. They defined the entropy of a graded associative algebra $A$ as $H(A)=\limsup _{n\rightarrow \infty } a_{n} ^{1/n}$ where $a_{n}=\dim A(n)$ is the vector space dimension of the $n-th$ homogeneous component \cite{nss}. Notice that we  can use Theorem \ref{A} to construct algebras with arbitrarily small but nonzero entropy satisfying prescribed relations (if the number of relations is not too big).
 
 Inspiring open questions on the growth of algebras satisfying prescribed relations were asked by Zelmanov in \cite{z}. He also suggested composing results from \cite{ls} (where examples of nil algebras with polynomial growth were found) and the  Golod-Shafarevich theorem to attack these problems, which turned out to be an inspiring idea.
 These ideas were later investigated by a student of Zelmanov, Alexander  Young, who  obtained several interesting  results on the growth of factor algebras $A/I$ where $I$ is a regimented regimented ideal, that is a special type of  ideal containing repeated paterns, in some sense resembling nil ideals. For example, 
a regimented ideal generated by a single element $f_{1}\in A $ has a form $\bigcup_{1\leq i\leq \deg f}\sum_{j=0, 2,\ldots } A((\deg f)j)fA$. In \cite{sb} it was shown that similar results can be obtained without assumption that the ideal is regimented, answering a question by Zelmanov.  
 In  more detail,  it was shown in \cite{sb} that Golod-Shafarevich algebras with a polynomial number
     of defining relations of sparse degrees can be mapped onto  algebras with polynomial growth, under mild assumption on the number of generating relations of each degree.
The main objective of this paper is to show that a more general result holds, and to bound the growth function from below.
 \begin{theorem}[Theorem A]\label{A}
  Let $K$ be an algebraically closed field, and let $A$ be the free noncommutative algebra generated in degree one by elements $x, y$. Let
  $I$ denote the ideal generated in $A$ by homogeneous
  elements $f_{1}, f_{2}, \ldots \in A$. Let $r_{n}$ denote the number of
 elements among $f_{1}, f_{2}, \ldots $ with degrees larger than
 $2^{n}$ and not exceeding $2^{n+1}$. Suppose that $r_{i}=0$ for $i<8$.  Denote $Y=\{n:r_{n}\neq 0\}$.
 \begin{itemize}
\item Suppose that there are no elements among $f_{1}, f_{2}, \ldots $ with degree $k$ if 
$k\in [2^{n}-2^{n-3}, 2^{n}+2^{n-2}]$ for some $n$.
 \item Suppose that for all $m,n\in Y\cup \{0\}$ with $m<n$ we have \\
$2^{3n+4}r_{m}^{33}<r_{n}< 2^{2^{n-m-3}}$
 and $r_{n}<2^{2^{n/2-4}}.$
\end{itemize}
  Then there is an infinitely dimensional graded  algebra $R$, such that the following holds:
\begin{itemize}
\item[1.] Algebra $A/I$ can be homomorphically mapped onto algebra $R$.
\item[2.] If $n$ is a natural number, and $k$ is maximal such that $k\leq 2 log (n)$ and $k\in Y$ then  
   \[\dim R_{n}\leq 8n^{4}r_{k}^{33},\]
where $R_{n}\subseteq R$ is the  $K$-linear space consisting of elements of degrees not exceeding  $k$.
\item[3.] Moreover, for all $j\in Y$, $j\leq log(n)$ we have  
\[{1\over 2} r_{j}^{4}\leq dim R_{n}.\]
\end{itemize} 
\end{theorem}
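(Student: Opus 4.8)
The plan is to construct $R$ by an explicit, level-by-level ``reset-and-grow'' procedure, in the spirit of the Golod--Shafarevich construction \cite{gs} and of the constructions of nil and slow-growth algebras in \cite{ls}, \cite{sb}, \cite{s}. Write $Y=\{n_{1}<n_{2}<\cdots\}$. One builds a chain of homogeneous surjections $A/I=B^{(0)}\twoheadrightarrow B^{(1)}\twoheadrightarrow\cdots$, where the step from $B^{(t-1)}$ to $B^{(t)}$ adjoins two families of homogeneous relations: first, the defining elements $f_{i}$ of degree in $(2^{n_{t}},2^{n_{t}+1}]$, of which there are $r_{n_{t}}$; and second, auxiliary \emph{control} relations, confined to the degree windows $[\,2^{n}-2^{n-3},\,2^{n}+2^{n-2}\,]$ (for $n=n_{t}$ and for those $n\notin Y$ lying in the range up to $n_{t}$), whose purpose is to collapse the degree-$2^{n}$ component onto the span of a small, structured family of monomials and then to hold growth in check on the climb up to $2^{n+1}$. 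The hypothesis that no $f_{i}$ has degree in any such window is exactly what lets the two families be tallied against each other degree-by-degree with no overlap. One then sets $R=A/J$ with $J=\bigcup_{t}\ker(A\to B^{(t)})$: it is graded, $A/I$ maps onto it (this is part~1), and the remaining content of the theorem is the two-sided Hilbert estimate together with infinite-dimensionality.

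The engine is a quantitative Golod--Shafarevich step, which I would isolate as a lemma, adapting \cite{s}, \cite{sb}: if $B=A/J'$ is a graded quotient of the free algebra $A=K\langle x,y\rangle$ with $\dim B_{i}\le h_{i}$ for $i\le d$, and $g_{1},\dots,g_{s}$ are homogeneous of degrees in an interval just above $d$ with $s$ small --- the precise condition reducing to a bound of the form $\sum_{j}2^{-\deg g_{j}}\ll 1$, which $r_{n}<2^{2^{\,n-m-3}}$ (used at the relevant earlier relation level $m$) secures with room to spare --- then there is a graded quotient $\bar B$ killing all the $g_{j}$, still infinite-dimensional because the Golod--Shafarevich inequality $\dim\bar B_{i}\ge 2\dim\bar B_{i-1}-\sum\dim\bar B_{i-d}$ (the sum over all defining relations of degree $d\le i$) survives, and with $\dim\bar B_{i}$ still below the same $h_{i}$ in the relevant range. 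Moreover one may arrange in $\bar B$ that the degree-$2^{n}$ component is spanned by a chosen family $V_{n}$ of monomials with $\tfrac12 r_{n}^{4}\le\#V_{n}\le c\,r_{n}^{4}$, and that every monomial of degree $i\in[2^{n},2^{n+1}]$ factors, modulo the control relations, as one element of $V_{n}$ times an element of strictly smaller degree; then $\dim\bar B_{i}\le\#V_{n}\cdot\max_{j<2^{n}}\dim\bar B_{j}$ is controlled recursively by peeling off dyadic blocks and stays below $(2^{n+1})^{4}r_{n}^{33}$ throughout --- which is the origin of the exponent $33$ and the polynomial factor in part~2. The lower reservation $\#V_{n}\ge\tfrac12 r_{n}^{4}$ is forced: one needs enough monomials at degree $2^{n}$ both to express the $r_{n}$ incoming relations and to leave the Golod--Shafarevich inequality enough slack to keep certifying growth. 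Selecting $V_{n}$ and the control relations is where algebraic closedness of $K$ enters, via a dimension count for an associated family of linear/bilinear maps that needs the ground field to be large enough to guarantee an admissible (``generic'') choice.

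Granting the lemma, the three conclusions follow by induction on $t$, the inductive hypothesis being a partial-dimension bound of the shape in part~2. Part~1 is immediate. For part~2 one propagates the recursion of the lemma and uses the very fast growth $2^{3n+4}r_{m}^{33}<r_{n}$ ($m<n$ in $Y$): this makes the contribution of the last collapse --- at the maximal $k\in Y$ below the stated logarithmic threshold --- dominate all earlier ones and absorb the polynomial factors picked up from the intervening windows, leaving the clean bound $8n^{4}r_{k}^{33}$ for the truncation in question; between consecutive elements of $Y$ nothing new is imposed, and the only growth is the bounded doubling inside a single window, already counted. For part~3 one feeds the Golod--Shafarevich lower inequality back: after the stage killing the level-$j$ relations one still has $\dim R_{2^{j}}\ge\#V_{j}\ge\tfrac12 r_{j}^{4}$, and the reservation was kept consistent with infinite-dimensionality precisely because $r_{j}<2^{2^{\,j/2-4}}$ holds $\#V_{j}$ far below the free bound $2^{2^{j}}$; so the truncation carries at least $\tfrac12 r_{j}^{4}$ dimensions for every $j\in Y$ below the threshold. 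Infinite-dimensionality of $R$ is then a by-product of the same surviving inequality.

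The main obstacle --- and essentially all of the real work --- is the simultaneous bookkeeping that makes the induction close: at each stage $n$ one must retain enough monomials to (i) express all $r_{n}$ incoming relations and perform the collapse consistently, which forces $\dim R_{2^{n}}\gtrsim r_{n}^{4}$ and, beneath that, enough slack for Golod--Shafarevich; yet (ii) not so many that the climb to degree $2^{n+1}$ overshoots the target $8n^{4}r_{n}^{33}$; while (iii) making the freshly added relations, of degree up to $2^{n+1}$, cheap relative to the dimension already accumulated since the previous relation level $m$ --- which is exactly what $r_{n}<2^{2^{\,n-m-3}}$ delivers. Each numerical hypothesis in the statement is precisely one of the inequalities needed to reconcile (i)--(iii); the delicate point is that the per-window loss factor of order $n^{4}r^{33}$, compounded over the sparse set $Y$, must never catch up with the doubly-exponential reservoir $2^{2^{\,n}}$ that the Golod--Shafarevich step consumes at the next stage --- which is guaranteed by the gaps in $Y$ being wide ($r_{n}<2^{2^{\,n-m-3}}$) while the $r_{n}$ are at once large enough ($>2^{3n+4}r_{m}^{33}$) and capped ($<2^{2^{\,n/2-4}}$). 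I expect verifying the induction step, with its nested estimates, to be the bulk of the argument.
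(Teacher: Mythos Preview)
Your outline captures the right instincts---dyadic windows, monomial ``bases'' of controlled size at each scale, and the role of each numerical hypothesis---but the mechanism you describe is not the one the paper uses, and your sketch has a genuine gap at the lower bound. The paper does not build a chain of quotients $B^{(0)}\twoheadrightarrow B^{(1)}\twoheadrightarrow\cdots$ with control relations inserted stage by stage. Instead, following \cite{ls}, \cite{sb}, \cite{s}, it constructs a single ideal $E$ in one pass via a direct-sum framework: for each $m$ one chooses subspaces $V(2^{m})\oplus U(2^{m})=A(2^{m})$ with $V(2^{m})$ monomial and with dimensions governed by auxiliary integers $e(n)$ (Lemma~\ref{L34}); then $E(k)=\{r\in A(k): ArA\cap A(2^{n+1})\subseteq U(2^{n})A(2^{n})+A(2^{n})U(2^{n})\}$ and $R=A/E$. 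The relations $f_{i}$ are absorbed into $U(2^{n})$ through auxiliary spaces $F(2^{n})$ (Proposition~\ref{P21}, where algebraic closedness is actually used), and the upper bound comes from the combinatorial count of Proposition~\ref{P1}, which bounds $\dim A(k)/E(k)$ by $\sum_{j}\dim V^{<}(k-j)\dim V^{>}(j)$---a two-sided factorisation, not the one-sided ``$V_{n}$ times something smaller'' you describe.

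The real gap is part~3. You write that ``one feeds the Golod--Shafarevich lower inequality back'' to get $\dim R_{2^{j}}\ge \#V_{j}$. But the Golod--Shafarevich inequality counts \emph{all} defining relations, and once you have added enough ``control relations'' to force the upper bound in part~2, those relations enter the count and you have given no reason why the inequality still yields anything useful. The paper makes no appeal to the Golod--Shafarevich inequality for the lower bound. Instead, Lemma~\ref{L32} proves it by a direct linear-independence argument: one takes the monomials spanning $V(2^{l-1})$, notes that at least half of them end in (say) $x$, writes them as $v_{i}x$, and shows that a nontrivial $K$-linear combination $v=\sum\alpha_{i}v_{i}$ cannot lie in $E$ because for any $w\in V(2^{l-2})$ the element $wvxw$ lands in $V(2^{l-1})V(2^{l-1})$, which meets $U(2^{l-1})A(2^{l-1})+A(2^{l-1})U(2^{l-1})$ only in $0$. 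This uses the explicit definition of $E$ in an essential way; nothing in your chain-of-quotients picture supplies a substitute, and without it the claim $\dim R_{2^{j}}\ge\#V_{j}$ is unsupported.
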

 We note that the assumption on the number of relations is necessary, in fact there are Golod-Shafarevich algebras without non trivial homomorphic images of subexponential or polynomial growth \cite{smok1}. An analogous result holds for Golod-Shafarevich groups, though the proof is completely different \cite{ers}.   

Observe that Theorem \ref{A}  allows us to construct algebras with various growth functions, for example with a subexponential growth, which satisfy given relations. This answers a question of Vesselin Drensky about when there exist  algebras of subexponential (and larger than polynomial) growth  satisfying prescribed relations \cite{dren}. 
 Observe that if the algebra $R$ in Theorem \ref{A} is not Jacobson radical, then we can construct a primitive algebra satisfying the prescribed relations using the fact that the Jacobson radical is the intersection of all  primitive ideals in the given ring.
 Notice also that if the number of relations $f_{1},  f_{2}, \ldots $ is finite, under the assumptions of Theorem \ref{A} the algebra $A/I$ can be mapped onto prime Noetherian algebra with linear growth (by using the main result of $\cite{s}$ and Lemma \ref{L33}). 
 Notice that Theorem \ref{A} allows us to bound the entropy of the constructed algebra $R$, with entropy defined as in \cite{nss}.  Observe that some  of the elements $f_{1}, f_{2}, \ldots $ may be zero, which allows us to apply Theorem \ref{A} to constuct algebras with various growth functions of various entropies (for some related results see \cite{kkm}).

 Recently many  open questions have arisen about algebras satisfying a prescribed number of relations  in the area of resolutions of noncommutative singularities. 
 The following question related to equivalences  of the derived category of 3-folds  in algebraic geometry was posed in \cite{dw}, Remark $5.3$:  Suppose that $F$ is the formal free algebra in two variables, and consider two relations $f_{1}, f_{2}$ such that if we write both $f_{1}$ and $f_{2}$ as a sum of words, each word has degree two or higher. Denote $I$ to be the two sided ideal generated by $f_{1}$ and $f_{2}$. Is it true that  if $F/I$ is finite dimensional,  it cannot be commutative? 
 In chapter $4$  we answer this question and prove a slightly more general result, namely:
\begin{theorem}\label{2} Let $K$ be a field.
 Let $F$ be either the free associative $K$- algebra on the set of free generators $X=\{x_{1}, x_{2}, \ldots ,x_{n}\}$  over the field $K$  or $F$ be the formal free power series algebra over $K$  in $n$ variables
 $x_{1}, \ldots , x_
{n}$.  Let $d=2$ for $n=2$, and let $d={n(n+1)\over 2}-2$ for $n>2$. Consider 
$d$ relations $f_{1}, f_{2}, \ldots , f_{d}\in F$ such that if we write each of the $f_{1}, \ldots , f_{d}$ as a sum of 
 words then each word has degree two or higher. If  $F/I$ is finite dimensional then it cannot be commutative. 
\end{theorem}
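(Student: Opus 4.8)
The plan is to argue by contradiction: assuming $F/I$ is both commutative and finite-dimensional, I will show that its abelianization needs strictly fewer than $n$ generators locally, hence cannot be Artinian. So suppose $R:=F/I$ is commutative. Then $I$ contains the commutator ideal $C$ (the two-sided ideal of $F$ generated by all $x_ix_j-x_jx_i$, $i<j$), so $R\cong P/\bar I$, where $P:=F/C$ is the commutative polynomial ring $K[x_1,\dots,x_n]$ — or the commutative power-series ring $K[[x_1,\dots,x_n]]$ in the power-series case — and $\bar I$ is the image of $I$ in $P$. Write $\mathfrak m_F=F_{\ge 1}$ for the augmentation ideal of $F$ and $\mathfrak m=(x_1,\dots,x_n)$ for that of $P$. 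Since every word appearing in every $f_i$ has degree $\ge 2$, we have $f_i\in\mathfrak m_F^2$, hence $I\subseteq\mathfrak m_F^2$ and $\bar I\subseteq\mathfrak m^2$.

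The heart of the argument is the estimate $\dim_K \bar I/\mathfrak m\bar I\le d-\binom n2$. To prove it, set $N:=\mathfrak m_F I+I\mathfrak m_F$; since $I\subseteq\mathfrak m_F^2$ we get $N\subseteq\mathfrak m_F^3$. As $I$ is generated as a two-sided ideal by $f_1,\dots,f_d$, reduction modulo $N$ kills $\mathfrak m_F f_i$ and $f_i\mathfrak m_F$, so the images of $f_1,\dots,f_d$ span $I/N$ and $\dim_K I/N\le d$. Since $C\subseteq I$ there is an exact sequence $0\to C/(C\cap N)\to I/N\to I/(C+N)\to 0$. The $\binom n2$ elements $x_ix_j-x_jx_i$ lie in $C$ and, being nonzero homogeneous of degree $2$, no nontrivial $K$-combination of them lies in $\mathfrak m_F^3\supseteq C\cap N$; hence they remain linearly independent in $C/(C\cap N)$, giving $\dim_K C/(C\cap N)\ge\binom n2$. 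On the other hand, the image of $N$ under $F\to P$ is $\mathfrak m\bar I$, so $I/(C+N)\cong\bar I/\mathfrak m\bar I$. Subtracting dimensions in the exact sequence yields the claimed bound.

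Now substitute the prescribed $d$. For $n=2$ the bound reads $\dim_K\bar I/\mathfrak m\bar I\le 2-1=1$; for $n>2$, since $d=\binom{n+1}2-2$, it reads $\dim_K\bar I/\mathfrak m\bar I\le\binom{n+1}2-2-\binom n2=n-2$. In every case $\dim_K\bar I/\mathfrak m\bar I\le n-1<n$. Localizing $P$ at $\mathfrak m$ (nothing to do when $P$ is the power-series ring), $S:=P_{\mathfrak m}$ is a regular local ring of Krull dimension $n$ and $\bar I S$ is generated by $\dim_K\bar I/\mathfrak m\bar I<n$ elements, so Krull's height theorem gives $\operatorname{ht}(\bar I S)<n$; hence $S/\bar I S$ has Krull dimension $\ge 1$ and is therefore infinite-dimensional over $K$. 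But $S/\bar I S=R_{\mathfrak m}$ is a localization of $R$, which would be Artinian if $R$ were finite-dimensional — a contradiction. Therefore $F/I$ cannot be at once commutative and finite-dimensional.

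I expect the main obstacle to be the bookkeeping in the middle step: one must check carefully that the $\binom n2$ commutators are genuinely independent of the other relations modulo $N$ (so they use up $\binom n2$ of the $d$ generator slots of $I$), while their images in $P$ vanish, so that $\bar I$ ends up needing fewer than $n$ generators near $\mathfrak m$. Once that is established, the finish is a routine application of Krull's generalized principal ideal theorem, and the value $d=\binom{n+1}2-2$ is exactly calibrated so that $d-\binom n2=n-2<n$ (with the slightly larger, still harmless, choice $d=2$ when $n=2$).
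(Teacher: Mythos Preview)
Your proof is correct and is considerably more direct than the paper's. Both arguments reach the same endpoint---an ideal in the commutative polynomial (or power-series) ring in $n$ variables generated by fewer than $n$ elements cannot give a nilpotent quotient, by Krull's height theorem (the paper's Lemma~\ref{L42})---but the routes differ substantially.

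The paper does not count generators of $\bar I$ directly. Instead it argues that, after subtracting the commutators, the tails $g_k=f_k-(x_ix_j-x_jx_i)$ satisfy $G\subseteq F^jG+J$ for every $j$, where $J$ is generated by the commutators together with the remaining $d-\binom{n}{2}$ relations. From this it deduces $x_i^t\in F^{\alpha}+J$ for all $\alpha$, and then invokes a Cayley--Hamilton style determinant trick (Lemma~\ref{L41}) to produce a single element $z$ with $F/(zF+J)$ nilpotent. This passes to the commutative ring as an ideal with $(d-\binom{n}{2})+1$ generators, and one needs this number to be strictly below $n$. The case $n=2$ is treated by a separate divisibility argument.

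Your exact-sequence count $\dim_K \bar I/\mathfrak m\bar I\le d-\binom{n}{2}$ eliminates the determinant trick, handles all $n$ uniformly, and---notably---saves one generator: you bound the number of local generators of $\bar I$ by $d-\binom{n}{2}$ rather than $d-\binom{n}{2}+1$. Consequently your argument also works when $d=\binom{n+1}{2}-1$, i.e.\ it shows that $F/I$ cannot be simultaneously commutative and finite-dimensional with $\binom{n+1}{2}-1$ relations. The paper explicitly leaves this case open (see the remark following Theorem~\ref{2}), so your method in fact yields a sharper theorem than the one stated.
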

 Notice that relations $f_{1}, f_{2}, \ldots $ in Theorem \ref{2} need not be homogeneous. 
 Wemyss constructed several not commutative finitely dimensional algebras defined by $f_{1}$ and $f_{2}$ as above. 
    These examples also show that under assumptions of Theorem \ref{2}  it is possible to obtain finitely dimensional algebra $F/I$ which is not commutative.
  Let $F$ be a free algebra in $n$ generators $x_{1}, \ldots , x_{n}$, and let $I$ be the ideal generated by ${n(n+1)\over 2}$ relations 
  $x_{i}x_{j}-x_{j}x_{i}$ and $x_{i}^{2}$ for $i\neq j$, $i,j\leq n$, $n>2$. Then $F/I$ is finitely dimensional and commutative.  On the other hand Theorem \ref{2} assures that the algebra $F/I$ with $I$ generated by less than ${n(n+1)\over 2}-1$ 
 relations cannot be commutative and finitely dimensional. We don't know if there is finitely dimensional, commutative algebra $F/I$ with $I$ generated by ${n(n+1)\over 2}-1$ relations.
 For some recent results related to power series rings and polynomial rings with a small number of defining relations, see \cite{mz1, mz2, mz3, mz4, n, ziem}.
 
\subsection{Notation and definition of the Golod-Shafarevich algebras}\label{S01}

 We use the same notation as in \cite{sb}, \cite{s}.
In what follows, $K$ is a countable, algebraically closed field and
$A$ is the free $K$-algebra in two non-commuting indeterminates $x$
and $y$. By a graded algebra we mean an algebra graded by the additive semigroup of natural numbers.
The set of monomials in $\{x,y\}$ is denoted by $M$ and, for
each $k\geq 0$, its subset of monomials of degree $k$ is denoted by
$M(k)$.  Thus, $M(0)=\{1\}$ and for $k\geq1$ the elements in $M(k)$
are of the form $x_1\cdots x_k$ with all $x_i\in \{x, y\}$. The span
of $M(k)$ in $A$ is denoted by $A(k)$; its elements are called
\emph{homogenous polynomials of degree $k$}. More generally, for any
 subsetset $X$ of a graded algebra, we denote by $X(k)$ its subset of homogeneous
elements of degree $k$. The \emph{degree} $\deg f$ of an element $f \in A$ is the least
$k\ge0$ such that $f \in A(0) + \cdots + A(k)$. Any $f\in A$ can be
uniquely written in the form $f=f_0+f_1+\cdots+f_k$ with each $f_i\in
A(i)$. The elements $f_i$ are the \emph{homogeneous components} of
$f$.  A (right, left, two-sided) ideal of $A$ is \emph{homogeneous} if
it is spanned by its elements' homogeneous components.  If $V$ is a
linear space over $K$, we denote by $\dim V$ the dimension of $V$
over $K$ (for information about the growth of algebras, see \cite{kl}).
 All logarithms are in base $2$.

We now recall the definition of Golod-Shafarevich algebras.
 Let $A$ be a free associative  algebra in $d$ generators $X=\{x_1, x_2, \ldots ,x_d\}$, with generators $x_i$ of degree
$1$. Suppose we are given homogeneous elements $f_{1}, f_{2}, \ldots \in A$, with $r_{i}$ elements of  degree $i$ among $f_{1},f_{2}, \ldots $ (and $r_{1}=0$). 
 Let $I$ be the ideal of $A$ generated by $f_{1}, f_{2}, \ldots $. Denote $R=A/I$. Recall that  $H_{R}(t)=\sum_{i=0}^{\infty }\dim R(i)t^{i}$ is called the Hilbert series of algebra $R$.
 Golod and Shafarevich proved in \cite{gs}
that $H_{R}(t)(1-mt+\sum_{i=2}^{\infty} r_{i}t^{i})\geq 1$ holds
co\"efficient-wise.  An
algebra admitting such a presentation is called a Golod-Shafarevich algebra. 
 In a survey by Zelmanov \cite{zel} it was noted that if there exists a number $t_0\in(0,1)$
such that $1-mt+\sum_{i=2}^{\infty} r_{i}t^{i}$ converges at $t_0$ and
$1-mt_{0}+\sum_{i=2}^{\infty} r_{i}t_{0}^{i}<0$, then $A/I$ is infinite dimensional.

 {\em  Notice that  if 
  for  infinitely many natural numbers $m$, there is a number $t_{m}>0$ such that
 $1-dt_{m}+\sum_{i=2}^{m } r_{i}{t_{m}}^{i}<0$ then algebra $A/I$ is infinitely dimensional.} 
Although this is an easily found consequence of  Zelmanov's observation,  we did not find it in the literature, so we provide a short proof below for the convenience of the reader:
\begin{proof}
 Fix number $m$ and let $I_{m}$ be the ideal of $A$  generated by all elements $f_{i}$ which have degrees not exceeding $m$. If there is $t_{m}>0$ such that $1-dt_{m}+\sum_{i=2}^{m } r_{i}{t_{m}}^{i}<0$ then by the Golod-Shafarevich theorem and the previous remarks from \cite{zel} $A/I_{m}$ is infinitely dimensional. 

Since $I$ is a graded ideal and $I=\bigcup_{i=2}^{\infty }I_{m}$ it follows that $A/I$ is infinite dimensional. Indeed a graded and finite dimensional algebra is nilpotent. If $A/I$ is finitely dimensional them $A(n)\in I$ for some $n$, and since $I$ is graded  $A(n)\subseteq I_{n}$, so $A/I_{n}$ is finitely dimensional, a contradiction.
\end{proof}
\section{General construction}\label{S1}
 The general construction is similar to that in \cite{ls}, \cite{sb}, \cite{s}. 
  We will use the notation from \cite{s}.  In this chapter we will review some basic concepts from chapter 1 in \cite{s}.

 Let $K$ be a field and
 $A$ be a free $K$-algebra generated in degree one by two elements $x$, $y$.
 Suppose that subspaces $U(2^m),V(2^m)$ of $A(2^m)$ satisfy,
  for every $m\geq 1$, the following properties:
  \begin{itemize}
  \item[1.] $V(2^m)$ is spanned by monomials;\label{prop:3}
  \item[2.] $V(2^m)+U(2^m)=A(2^m)$ and $V(2^m)\cap U(2^m)=0$;
  \item[3.] $A(2^{m-1})U(2^{m-1})+U(2^{m-1})A(2^{m-1})\subseteq U(2^m)$;
  \item[4.] $V(2^m)\subseteq V(2^{m-1})V(2^{m-1})$, where for $m=0$ we set $V(2^{0})=Kx+Ky$, $U(2^{0})=0$.
  \end{itemize}
 Following \cite{ls}, define a graded subspace $ E$ of $A$
by constructing its homogeneous components $ E(k)$ as
follows. Given $k\in N$, let $n\in N$ be such that $2^{n-1}\leq
k<2^n$. Then $r\in E(k)$ precisely if, for all
$j\in\{0,\dots,2^{n+1}-k\}$, we have $A(j)rA(2^{n+1}-j-k)\subseteq
U(2^n)A(2^n)+A(2^n)U(2^n)$. More concisely,
\begin{equation}
   E(k)=\{r\in A(k)\mid ArA\cap A(2^{n+1})\subseteq U(2^n)A(2^n)+A(2^n)U(2^n)\}.
\end{equation}
Set then $ E=\bigoplus_{k\in N} E(k)$.

\begin{lemma}[Lemma 1.1, \cite{s}]\label{L1}
  The set $ E$ is an ideal in $A$. Moreover, if all sets $V(2^{n})$ are nonzero, then
 algebra $A/ E$ is infinite dimensional over $K$.
\end{lemma}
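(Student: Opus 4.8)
\emph{Plan and the key claim.} I would derive both assertions from one monotonicity (``bootstrapping'') statement that raises the test degree in the definition of $E$, since the ideal property and the infinite dimensionality both follow from it together with properties 2--4 of $U$ and $V$. For brevity put $D(2^m):=U(2^m)A(2^m)+A(2^m)U(2^m)\subseteq A(2^{m+1})$, so that, by definition, $r\in E(k)$ with $2^{n-1}\le k<2^n$ means precisely that $A(i)\,r\,A(i')\subseteq D(2^n)$ whenever $i,i'\ge 0$ and $i+k+i'=2^{n+1}$. Property 3 gives $U(2^m)A(2^m)\subseteq U(2^{m+1})$ and $A(2^m)U(2^m)\subseteq U(2^{m+1})$, hence $D(2^m)\subseteq U(2^{m+1})$, and, using $A(2^{m+1})=A(2^m)A(2^m)$, the absorption rules $D(2^m)A(2^{m+1})\subseteq D(2^{m+1})$, $A(2^{m+1})D(2^m)\subseteq D(2^{m+1})$ and $A(2^m)D(2^m)A(2^m)\subseteq D(2^{m+1})$. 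The claim I would prove is that if $2^{n-1}\le k<2^n$ and $r\in E(k)$, then $A(i)\,r\,A(i')\subseteq D(2^m)$ for every $m\ge n$ and all $i,i'\ge 0$ with $i+k+i'=2^{m+1}$; this is proved by induction on $m$, the case $m=n$ being the definition of $E(k)$.

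\emph{The inductive step.} Assume the claim at level $m$ and take $i+k+i'=2^{m+2}$ with $i,i'\ge 0$; note $k<2^n\le 2^m$. If $i+k\le 2^{m+1}$, then (writing $A(i')=A(2^{m+1}-i-k)A(2^{m+1})$) we get $A(i)\,r\,A(i')\subseteq\bigl(A(i)\,r\,A(2^{m+1}-i-k)\bigr)A(2^{m+1})\subseteq D(2^m)A(2^{m+1})\subseteq D(2^{m+1})$, the middle inclusion being the level-$m$ claim. If $i\ge 2^{m+1}$, symmetrically $A(i)\,r\,A(i')\subseteq A(2^{m+1})\bigl(A(i-2^{m+1})\,r\,A(i')\bigr)\subseteq A(2^{m+1})D(2^m)\subseteq D(2^{m+1})$. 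In the remaining case $2^{m+1}-k<i<2^{m+1}$ one checks, from $k<2^m$, that $i>2^m$ and $i'>2^m$, so (writing $A(i)=A(2^m)A(i-2^m)$, $A(i')=A(i'-2^m)A(2^m)$) $A(i)\,r\,A(i')\subseteq A(2^m)\bigl(A(i-2^m)\,r\,A(i'-2^m)\bigr)A(2^m)\subseteq A(2^m)D(2^m)A(2^m)\subseteq D(2^{m+1})$, again by the level-$m$ claim. In each case the degree bookkeeping needed to invoke the claim (that the inner index sums equal $2^{m+1}$) is immediate from $i+k+i'=2^{m+2}$ and $k<2^m$.

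\emph{Consequences.} Each $E(k)$ is a $K$-subspace (its defining condition is linear in $r$), so $E=\bigoplus_k E(k)$ is a graded subspace, and to see it is an ideal it suffices to check $a r,\ r a\in E$ for homogeneous $a\in A(l)$ and $r\in E(k)$. With $2^{n-1}\le k<2^n$ and $2^{m-1}\le k+l<2^m$ one has $m\ge n$, and $A(j)(a r)A\bigl(2^{m+1}-j-k-l\bigr)\subseteq A(j+l)\,r\,A\bigl(2^{m+1}-(j+l)-k\bigr)\subseteq D(2^m)$ by the claim, so $a r\in E(k+l)$; the case $r a$ is identical after absorbing $a$ into the right factor. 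Hence $E$ is a homogeneous two-sided ideal. For infinite dimensionality it suffices, $E$ being graded, to prove $E(k)\ne A(k)$ for every $k\ge 1$. Fix $k$ with $2^{n-1}\le k<2^n$; every monomial of degree $2^{n+1}$ lies in $A\,r\,A$ for a suitable monomial $r$ of degree $k$, so $E(k)=A(k)$ would force $A(2^{n+1})\subseteq D(2^n)$. But $A(2^{n+1})=A(2^n)A(2^n)=(V(2^n)+U(2^n))(V(2^n)+U(2^n))=V(2^n)V(2^n)+D(2^n)$ (by property 2, since every cross term and $U(2^n)U(2^n)$ lies in $D(2^n)$), so $A(2^{n+1})\subseteq D(2^n)$ would give $V(2^n)V(2^n)\subseteq D(2^n)$; property 4 then yields $V(2^{n+1})\subseteq V(2^n)V(2^n)\subseteq D(2^n)\subseteq U(2^{n+1})$, whence $V(2^{n+1})=V(2^{n+1})\cap U(2^{n+1})=0$ by property 2, contradicting the hypothesis that every $V(2^m)$ is nonzero. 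Thus $E(k)\subsetneq A(k)$ for all $k\ge 1$ and $\dim_K(A/E)=\infty$.

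\emph{Main difficulty.} The one subtle point is the ``straddling'' case of the inductive step, where the midpoint $2^{m+1}$ of the degree-$2^{m+2}$ word falls strictly inside the factor $r$, which, being a polynomial and not a monomial, cannot be cut there; the remedy is to trap $r$ inside a central sub-word of degree $2^{m+1}$ by peeling off blocks of the smaller degree $2^m$ at both ends, which is possible exactly because $k<2^n\le 2^m$. Everything else is dyadic bookkeeping together with the two consequences of property 3 recorded above.
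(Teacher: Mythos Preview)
Your proof is correct. The paper does not supply its own argument for this lemma, deferring instead to Theorem~5 of \cite{ls}; your bootstrapping claim (raising the test level from $2^{n+1}$ to every $2^{m+1}$ with $m\ge n$, with the three-case split according to where the cut falls) and the deduction of both the ideal property and $E(k)\subsetneq A(k)$ from it are exactly the approach used there.
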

 Notice that the proof of the first claim is the same as in Theorem 5 \cite{ls}.

In the next chapters we will construct appropriate sets $U(2^{n})$, $V(2^{n})$, so that the algebra $R=A/E$ will satisfy thesis of Theorem \ref{A}.

Let $k\in N$ be given. We write it as a sum of increasing powers of $2$,
namely $k=\sum_{i=1}^t 2^{p_i}$ with $0\leq p_1 < p_2 < \ldots <
p_t$. Following \cite{sb}, \cite{s} we set
\begin{align}
  U^<(k) &= \sum_{i=0}^t A(2^{p_1}+\cdots+2^{p_{i-1}})U(2^{p_i})A(2^{p_{i+1}}+\cdots+2^{p_t}),\label{def:U<}\\
  V^<(k) &= V(2^{p_1})\cdots V(2^{p_t}),\\
  U^>(k) &= \sum_{i=0}^t A(2^{p_t}+\cdots+2^{p_{i+1}})U(2^{p_i})A(2^{p_{i-1}}+\cdots+2^{p_1}),\\
  V^>(k) &= V(2^{p_t})\cdots V(2^{p_1}).\label{def:V>}
\end{align}
 The following lemma corresponds to Lemma $1.4$ in \cite{s}.
\begin{lemma}[\cite{ls}, pp. 993--994]\label{L2}
  For all $k\in N$ we have $A(k)=U^<(k)\oplus V^<(k)=U^>(k)\oplus
  V^>(k)$.

  For all $k,\ell\in N$ we have $A(k)U^<(\ell)\subseteq U^<(k+\ell)$ and $U^>(k)A(\ell)\subseteq U^>(k+\ell)$.
\end{lemma}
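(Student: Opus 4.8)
The plan is to prove the two assertions in turn, each by an induction keyed to the binary expansions of the degrees, and to exploit throughout that $A$ is free on the degree-one generators $x,y$: the multiplication map $A(p)\otimes_{K}A(q)\to A(p+q)$ is then an isomorphism of $K$-vector spaces (a monomial of degree $p+q$ factors uniquely into a degree-$p$ monomial times a degree-$q$ monomial), so in particular $A(p)A(q)=A(p+q)$ for all $p,q$, and a direct-sum decomposition of $A(p)$ together with one of $A(q)$ induces one of $A(p+q)$.

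First, $A(k)=U^{<}(k)\oplus V^{<}(k)$. Write $k=2^{p_{1}}+\cdots+2^{p_{t}}$ with $p_{1}<\cdots<p_{t}$ and $k'=k-2^{p_{t}}$. Unwinding the definitions of $U^{<}$ and $V^{<}$ and using $A(a)A(b)=A(a+b)$ yields the recursions
\[
U^{<}(k)=U^{<}(k')A(2^{p_{t}})+A(k')U(2^{p_{t}}),\qquad V^{<}(k)=V^{<}(k')V(2^{p_{t}}).
\]
By induction $A(k')=U^{<}(k')\oplus V^{<}(k')$, and by the second listed property $A(2^{p_{t}})=U(2^{p_{t}})\oplus V(2^{p_{t}})$. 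Feeding these into $A(k)\cong A(k')\otimes A(2^{p_{t}})$ expresses $A(k)$ as a direct sum of the four spaces $X\cdot Y$ with $X\in\{U^{<}(k'),V^{<}(k')\}$ and $Y\in\{U(2^{p_{t}}),V(2^{p_{t}})\}$; the recursions identify $V^{<}(k)$ with the summand $V^{<}(k')V(2^{p_{t}})$ and $U^{<}(k)$ with the sum of the other three, so the claim follows (the case $t=0$ being $A(0)=0\oplus K$). The decomposition $A(k)=U^{>}(k)\oplus V^{>}(k)$ then follows by applying the above to the subspaces $U(2^{m}),V(2^{m})$ transported through the word-reversal anti-automorphism of $A$, which preserves the four listed properties and carries $U^{<},V^{<}$ to $U^{>},V^{>}$.

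For $A(k)U^{<}(\ell)\subseteq U^{<}(k+\ell)$ I would induct on $k$; the case $k=0$ is trivial. If $k\ge 1$, write $2^{K}$ for the top power of two in $k$, so $k=k''+2^{K}$ with $k''<2^{K}$ and $A(k)=A(k'')A(2^{K})$. Then $A(k)U^{<}(\ell)=A(k'')\bigl(A(2^{K})U^{<}(\ell)\bigr)$, so by the inductive hypothesis applied to $k''<k$ it is enough to prove $A(2^{K})U^{<}(\ell)\subseteq U^{<}(2^{K}+\ell)$. Expanding $U^{<}(\ell)$ by its definition, $A(2^{K})U^{<}(\ell)$ is a sum of terms $A(2^{K}+a)U(2^{m})A(b)$ where $2^{m}$ runs over the powers in the binary expansion of $\ell$, $a$ is the sum of the lower ones and $b$ the sum of the higher ones, and one must show each such term lies in $U^{<}(2^{K}+\ell)=U^{<}\bigl((2^{K}+a)+2^{m}+b\bigr)$. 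The only tool available is the third listed property in its two forms $A(2^{j})U(2^{j})\subseteq U(2^{j+1})$ and $U(2^{j})A(2^{j})\subseteq U(2^{j+1})$: together with $A(p)A(q)=A(p+q)$ these allow one to raise a $U$-factor by absorbing a neighbouring $A(2^{j})$ on either side, e.g. $A(c)U(2^{m})A(d)\subseteq A(c-2^{m})U(2^{m+1})A(d)$ when $c\ge 2^{m}$, and $A(c)U(2^{m})A(d)\subseteq A(c)U(2^{m+1})A(d-2^{m})$ when $d\ge 2^{m}$, in each case without changing $c+2^{m}+d$. One absorbs repeatedly, following the carries produced by the binary addition defining $2^{K}+\ell$, until the $U$-factor has been raised to a power $2^{M}$ for which the remaining left factor is $<2^{M}$ and the remaining right factor is divisible by $2^{M+1}$; at that point the term is, by inspection of the definition of $U^{<}$, literally one of its defining summands for degree $2^{K}+\ell$. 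Finally $U^{>}(k)A(\ell)\subseteq U^{>}(k+\ell)$ is the mirror statement under word reversal.

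I expect the real work to be concentrated in the last step: showing that a suitable sequence of these absorptions always reaches such a normal form. A careless choice of the side to absorb from can leave a low power of two stranded on the far side of the raised $U$-factor, from which one cannot recover; the choices have to be made so as to track the binary addition $2^{K}+\ell$ exactly, which is most cleanly organized by a secondary induction — on the number of set bits of $2^{K}+\ell$ lying above the position currently being processed, say. Everything else (the two direct-sum decompositions, and the reduction of the multiplicative statement to this single absorption problem) is formal manipulation with the four defining properties.
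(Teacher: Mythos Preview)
The paper does not supply its own proof of this lemma; it is simply attributed to \cite{ls}, pp.~993--994, so there is nothing in the paper itself to compare against. Your argument for the direct-sum decompositions is correct and complete: the high-bit recursion together with freeness of $A$ (so that multiplication $A(k')\otimes A(2^{p_t})\to A(k)$ is an isomorphism) gives the four-fold splitting, and word reversal handles the $U^{>},V^{>}$ versions.

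For $A(k)U^{<}(\ell)\subseteq U^{<}(k+\ell)$ your diagnosis is accurate but the argument is not finished. The reduction to $k=2^{K}$ is harmless but buys nothing: afterwards you still face the general absorption problem, and your proposed secondary induction ``on the number of set bits of $2^{K}+\ell$ above the current position'' is too vague to constitute a proof. Your warning about careless absorption is well-founded: from $A(2^{10})U(2)A(4)$ one must absorb left once, then right once, then left repeatedly to reach $A(6)U(2^{10})\subseteq U^{<}(1030)$; absorbing greedily on the left first lands at $A(2)U(2^{10})A(4)$, which is not a summand of $U^{<}(1030)$ and cannot be repaired. A clean way to close the gap is to prove directly the slightly stronger statement
\[
A(c)\,U(2^{m})\,A(d)\ \subseteq\ U^{<}(c+2^{m}+d)\qquad\text{whenever }2^{m}\mid d,
\]
by induction on $(d,c)$ in lexicographic order: if $2^{m+1}\nmid d$ then $d\ge 2^{m}$, right-absorb to $A(c)U(2^{m+1})A(d-2^{m})$ and note $2^{m+1}\mid(d-2^{m})$ with $d-2^{m}<d$; if $2^{m+1}\mid d$ and $c\ge 2^{m}$, left-absorb to $A(c-2^{m})U(2^{m+1})A(d)$ and note $2^{m+1}\mid d$ with $c-2^{m}<c$; if $2^{m+1}\mid d$ and $c<2^{m}$, then $A(c)U(2^{m})A(d)$ is literally the summand of $U^{<}(c+2^{m}+d)$ attached to the bit $2^{m}$. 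Since every term of $A(k)U^{<}(\ell)$ has the form $A(k+a)U(2^{m})A(b)$ with $2^{m+1}\mid b$, this suffices.
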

 The following proposition corresponds to Proposition $1.1$ in \cite{s}.
\begin{proposition}[Theorem 11, \cite{ls}]\label{P1}
  For every $k\in N$ we have
  \[\dim A(k)/ E(k)\leq \sum_{j=0}^k\dim V^<(k-j)\dim V^>(j),\]
 where we set $\dim V^{>}(0)=\dim V^{<}(0)=1$.
\end{proposition}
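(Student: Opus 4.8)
The plan is to produce a subspace $S\subseteq A(k)$ with $A(k)=E(k)+S$ and $\dim S\le\sum_{j=0}^{k}\dim V^{<}(k-j)\dim V^{>}(j)$; the natural candidate is $S=\sum_{j=0}^{k}V^{<}(k-j)V^{>}(j)$, whence the asserted bound follows from $\dim A(k)/E(k)\le\dim S\le\sum_{j}\dim V^{<}(k-j)\dim V^{>}(j)$. So the real task is to show that every $r\in A(k)$ is congruent modulo $E(k)$ to a sum of products $v'v''$ with $v'\in V^{<}(k-j)$, $v''\in V^{>}(j)$. As a preliminary I would identify the relevant quotient: writing $2^{n-1}\le k<2^{n}$ and $W=U(2^{n})A(2^{n})+A(2^{n})U(2^{n})$, the identification $A(2^{n+1})=A(2^{n})A(2^{n})$ together with $A(2^{n})=U(2^{n})\oplus V(2^{n})$ (property~2) gives $A(2^{n})\otimes A(2^{n})=\bigl(U\otimes A+A\otimes U\bigr)\oplus\bigl(V(2^{n})\otimes V(2^{n})\bigr)$, so that $W$ is complemented by $V(2^{n})V(2^{n})$ and $A(2^{n+1})/W\cong V(2^{n})\otimes V(2^{n})$. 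Hence membership of $r$ in $E(k)$ amounts to the vanishing, for every way of padding $r$ up to degree $2^{n+1}$, of the image of the padded word in $V(2^{n})\otimes V(2^{n})$; tracking the position of the padding relative to the degree-$2^{n}$ cut, this unwinds into a family of requirements each saying that some one-sided multiple $A(\cdot)\,r$, $r\,A(\cdot)$, or $A(\cdot)\,r\,A(\cdot)$ (with a prefix or suffix of $r$ in the straddling case) lands inside $U(2^{n})$.

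\medskip

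The engine for the reduction is Lemma~\ref{L2}: left multiplication by an arbitrary homogeneous element carries $U^{<}$ into a larger $U^{<}$, right multiplication carries $U^{>}$ into a larger $U^{>}$, and at a power-of-two degree $U^{<}(2^{n})=U^{>}(2^{n})=U(2^{n})$. Given $r$, I would first split $r=u+v$ along $A(k)=U^{<}(k)\oplus V^{<}(k)$; then $v\in V^{<}(k)=V^{<}(k)V^{>}(0)\subseteq S$, so it remains to treat $u\in U^{<}(k)$. Expanding $u$ along its defining sum \eqref{def:U<} and, on each summand $A(b_{<i})U(2^{p_{i}})A(b_{>i})$, splitting the right factor via $A(b_{>i})=V^{>}(b_{>i})\oplus U^{>}(b_{>i})$ and then the left factor in the dual way, and iterating this two-sided peeling (which terminates, as the $A$-blocks being peeled strictly shrink), one rewrites $A(k)$ as a sum of (i) genuine products $V^{<}(k-j)V^{>}(j)$, which lie in $S$, plus (ii) finitely many ``core'' terms, in each of which a single block $U(2^{p})$ sits between a fully peeled-off $V^{<}$-word on one side and a fully peeled-off $V^{>}$-word on the other, with no factor left to peel off the $U$-block itself.

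\medskip

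The decisive point — and the step I expect to be the main obstacle — is that every core term of type (ii) already lies in $E(k)$. Concretely, one pads such a term to degree $2^{n+1}$ by arbitrary monomials $a,b$ and shows that one of the two degree-$2^{n}$ halves of the result lies in $U(2^{n})$: the $A$-factors flanking the distinguished block $U(2^{p})$, together with the padding, are absorbed into it by repeated use of property~3 — equivalently, of the absorptions $A(\cdot)U^{<}\subseteq U^{<}$ and $U^{>}A(\cdot)\subseteq U^{>}$ of Lemma~\ref{L2} — producing a $U^{<}$ or a $U^{>}$ that reaches the degree-$2^{n}$ cut, where it coincides with $U(2^{n})$; hence the padded word lies in $U(2^{n})A(2^{n})+A(2^{n})U(2^{n})=W$. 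The delicate bookkeeping is to keep the intermediate degrees expressible as admissible sums of powers of two at every application of property~3 — i.e.\ to choose at each stage the side on which the flanking $A$-factor is amalgamated so that block-alignment is preserved — and this is precisely where the detailed shape of $U^{<}$, $U^{>}$ in \eqref{def:U<}--\eqref{def:V>} enters. Granting this, $A(k)=E(k)+S$; since $\dim S\le\sum_{j=0}^{k}\dim V^{<}(k-j)\dim V^{>}(j)$ under the stated conventions $\dim V^{<}(0)=\dim V^{>}(0)=1$, the claimed bound on $\dim A(k)/E(k)$ follows.
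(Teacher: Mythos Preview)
The paper gives no argument here beyond the citation to \cite{ls} and \cite{lsy}. Your overall framework --- show that $S=\sum_{j=0}^{k}V^{<}(k-j)V^{>}(j)$ spans $A(k)$ modulo $E(k)$ --- is the right one and is what those references establish. But the mechanism you propose fails: your central claim, that each ``core term'' in $V^{<}(a)\,U(2^{p})\,V^{>}(b)$ already lies in $E(k)$, is false, and no amount of bookkeeping in the application of property~3 will repair it.

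Counterexample. Take $U(1)=0$, $V(1)=A(1)$, $U(2)=K\,xy$, $V(2)=K\,xx+K\,yx+K\,yy$, and the minimal admissible $U(4)=U(2)A(2)+A(2)U(2)$, $V(4)=V(2)V(2)$. For $k=3$ (so $n=2$), your peeling of $r=x\cdot xy\in U^{<}(3)=A(1)U(2)$ terminates immediately with the core term $r\in V^{<}(1)\,U(2)\,V^{>}(0)$. But $r\cdot x^{5}=xxyx\cdot xxxx$ has both degree-$4$ factors in $V(4)$, so it lies outside $W=U(4)A(4)+A(4)U(4)$; hence $r\notin E(3)$. The point is exactly the obstacle you anticipate, but it is fatal rather than merely delicate: once the padding shifts the $U(2^{p})$ block off the dyadic grid (here to positions $[1,3]$ inside $[0,4]$), property~3 cannot realign it from either side.

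The argument that works is dual to yours. Instead of isolating summands of $r$ and placing each in $E(k)$, one shows that the common kernel of the projections
\[
\pi_{j}\colon A(k)=A(k-j)\otimes A(j)\longrightarrow V^{<}(k-j)\otimes V^{>}(j)
\]
(with kernel $U^{<}(k-j)A(j)+A(k-j)U^{>}(j)$) is contained in $E(k)$; the dimension bound follows at once. For $r$ with all $\pi_{j}(r)=0$ and a padding $A(i)\,r\,A(\ell)$ with $i+k+\ell=2^{n+1}$, one chooses $j$ \emph{after} seeing $i$: take $j$ so that the cut at degree $2^{n}$ (or, when $r$ lies entirely in one half, the cut at $2^{n-1}$ inside that half) falls at position $k-j$ within $r$. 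Then $r\in U^{<}(k-j)A(j)+A(k-j)U^{>}(j)$, and Lemma~\ref{L2} absorbs the outer factors up to an \emph{exact} power of two, giving $A(i)U^{<}(k-j)\subseteq U^{<}(2^{n})=U(2^{n})$ and $U^{>}(j)A(\ell)\subseteq U^{>}(2^{n})=U(2^{n})$ (respectively $U(2^{n-1})$, followed by property~3). The alignment succeeds precisely because $j$ is adapted to the padding, not fixed in advance by a decomposition of $r$. In the counterexample $r=xxy$ has $\pi_{3}(r)\neq 0$ (since $xxy=xx\cdot y\in V(2)A(1)=V^{>}(3)$), so it is excluded from $\bigcap_{j}\ker\pi_{j}$; equivalently, the correct argument places $xxy$ in $S$, not in $E(3)$.
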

\begin{proof} The proof is the same as the proof of Theorem $11$ in \cite{ls}, or the proof of Theorem $5.2$ in \cite{lsy}.
\end{proof}
\section{Chapters 3, 4, 5 and  6  from \cite{s}}\label{S2}

In this section we assume that $r_{i}$ and $Y, f_{1}, f_{2}, \ldots $ are as in Theorem $A$. We moreover assume that there are 
 natural numbers $\{e_{i}\}_{i\in Y}$ which satisfy the following conditions: for all $n\in Y$: $1\leq e(n)\leq n-1$, sets $S_{n}=\{n-1-e(n), n-1\}$ are disjoint and $r_{n}\leq 2^{t(n)}$ where $t_{n}=2^{e(n)-1}-3n-4-\sum_{k\in Y, k<n}2^{e(k)+2}$.
 Let $S=\bigcup _{n\in Y}\{n-1-e(n), n-1\}$.

We will use some results from \cite{s} and assume that the reader is  familiar with chapters  3, 4 and 5  of it.
 Also our Section \ref{S1} corresponds with section $1$ in \cite{s}, but we omitted Theorems $1.2$ and $1.3$ from \cite{s}, as they are not relevant to this paper.
 Because we are using the same notation, we can use  chapters $3,4,5$ in \cite{s}  without modification (similar results with different notation were also studied in \cite{sb}).

We have slightly weaker assumptions on elements $f_{i}$ in Theorem $A$   than those  in Theorem $0.1$ in \cite{s}, namely that we can have an infinite number of nonzero $f_{i}$  in Theorem $A$ and  only assume that there are no $f_{i}$ of degree $k$ if $k\in [2^{n}-2^{n-3}, 2^{n}+2^{n-2}]$  for some $n$.
 In \cite{s} it was assumed that the number of $f_{i}$ is finite and that  there are no $f_{i}$ of degree $k$ if $k\in [2^{n}-2^{n-3}, 2^{n}+2^{n-1}]$  for some $n$.
 However,  this does not influence the proofs in chapters 3, 4 and 5 in \cite{s}.
On the other hand, our assumption in Theorem $A$  that  for all $n,m\in Y\cup \{0\}$ with $m<n$ we have $2^{3n+4}r_{m}^{33}< r_{n}< 2^{2^{n-m-3}}$ is a stronger  assumption than $2^{3n+4}\prod_{i<n, i\in Y}r_{i}^{32}< r_{n}< 2^{2^{n-m-3}}$ in Theorem $0.1$ \cite{s} (see Lemma \ref{L33} for an explanation), hence we can use the results from \cite{s}.

 We slightly modify results from section $6$ in \cite{s} as follows, to suit our new assumptions.
  We begin with the following
lemma, which also generalizes Lemma $4.1$ from \cite{sb}.
\begin{lemma}[Modified Lemma $6.1$, \cite{s}] \label{L21} Let $n$ be a natural number.
Suppose that, for all $m<n$ with $m\in Y$, we constructed sets $F(2^{m})$, and for every $m<n$ we are given subspaces $V(2^{m})$, $U(2^{m})$ of  $A(2^{m})$ which satisfy
the following properties  (with
$Y$, $\{e(i)\}_{i\in Y}$ defined as at the beginning of this section):
\begin{itemize}
\item[1.] $\dim V(2^{m})=2$ if $m\notin S$;
\item[2.] $\dim V(2^{m-e(m)-1+j})=2^{2^{j}}$ for all $m\in Y$ and all $0\leq j\leq e(m)$;
\item[3.] $V(2^{m})$ is spanned by monomials;
\item[4.] $F(2^{m})\subseteq U(2^{m})$ for every $m\in Y$;
\item[5.] $V(2^{m})\oplus U(2^{m})=A(2^{m})$;
\item[6.] $A(2^{m-1})U(2^{m-1})+U(2^{m-1})A(2^{m-1})\subseteq U(2^{m})$:
\item[7.] $V(2^{m})\subseteq V(2^{m-1})V(2^{m-1})$.
\end{itemize}
  Consider all $f\in A(k)\cap \{f_{1}, f_{2}, \ldots \}$ with
  $2^n+2^{n-2}\leq k\leq 2^n+2^{n-1}+2^{n-2}$.
  Then there exists a linear $K$-space $F'(2^n)\subseteq A(2^n)$ with the
  following properties:
  \begin{itemize}
  \item $0<\dim F'(2^n)\leq \frac12\dim V(2^{n-1})^2$;
  \item for all $i,j\ge0$ with $i+j=k-2^n$ and for every $f\in A(k)\cap \{f_{1}, f_{2}, \ldots \}$, we have $f\in
    A(i)F'(2^n)A(j)+U^<(i)A(k-i)+A(k-j)U^>(j)$ with the sets
    $U^<(i),U^>(i)$ defined in Section \ref{S1}.
  \end{itemize}
 \end{lemma}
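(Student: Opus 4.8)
The plan is to follow the proof of Lemma~$6.1$ of \cite{s} (which in turn refines Lemma~$4.1$ of \cite{sb} and the estimates of \cite{ls}), checking that it survives under the present weaker hypotheses. Fix $n$. By the standing assumption that no $f_i$ has degree in $[2^m-2^{m-3},2^m+2^{m-2}]$ for any $m$, every relation $f\in A(k)\cap\{f_1,f_2,\dots\}$ with $2^n+2^{n-2}\le k\le 2^n+2^{n-1}+2^{n-2}$ satisfies $\ell:=k-2^n\in[2^{n-2},2^{n-1}+2^{n-2}]$, so in particular $2^{n-2}\le\ell<2^n$; these are exactly the relations whose degree falls in the ``gap'' just above $\frac32\cdot 2^n$. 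If there are no such $f$, take $F'(2^n)=Kv_1v_2$ for monomials $v_1,v_2\in V(2^{n-1})$ (which exist since $\dim V(2^{n-1})\ge2$); then $0<\dim F'(2^n)=1\le\frac12\dim V(2^{n-1})^2$ and the second bullet is vacuous, so we may assume at least one such $f$ exists.

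\emph{Outer reduction.} Fix such an $f=\sum_w c_w w$ and a splitting $i+j=\ell$. Writing each degree-$k$ monomial of $f$ as $w=w_1w_2w_3$ with $\deg w_1=i$, $\deg w_2=2^n$, $\deg w_3=j$, one uses $A(i)=U^<(i)\oplus V^<(i)$ and $A(j)=U^>(j)\oplus V^>(j)$ (Lemma \ref{L2}), in which $V^<(i),V^>(j)$ are spanned by monomials, to replace $w_1$ by its $V^<(i)$-part and $w_3$ by its $V^>(j)$-part; the discarded pieces contribute to $U^<(i)A(2^n+j)=U^<(i)A(k-i)$ and $A(2^n+i)U^>(j)=A(k-j)U^>(j)$ via the absorption rules $A(p)U^<(q)\subseteq U^<(p+q)$, $U^>(q)A(p)\subseteq U^>(p+q)$ of Lemma \ref{L2}. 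Thus, modulo the two named spaces, $f$ is congruent to an element of $V^<(i)\,A(2^n)\,V^>(j)$.

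\emph{Extracting the degree-$2^n$ core.} One now reduces the central degree-$2^n$ factor down to $V(2^{n-1})V(2^{n-1})$, using property $5$ ($A(2^{n-1})=V(2^{n-1})\oplus U(2^{n-1})$), properties $6$ and $7$ at levels below $n$, and again the absorption rules, \emph{interleaved with the outer reduction}: the $U(2^{n-1})$-residues produced when splitting the central block into halves (or, when both wings $i,j$ are shorter than $2^{n-1}$, into finer sub-blocks matched to the binary expansions of $i$ and $j$) are merged, on the left into $U^<$ and on the right into $U^>$, \emph{before} the $V^<(i)$- and $V^>(j)$-projections of the wings are imposed; the identities $V^<(i+2^{n-1})=V^<(i)V(2^{n-1})$ and its mirror then show the surviving central block lies in $V(2^{n-1})V(2^{n-1})$. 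Let $F'(2^n)\subseteq V(2^{n-1})V(2^{n-1})$ be the span of all these central blocks, over all relations $f$ of degree in the stated range and all splittings $i+j=\ell$ (enlarged, if it is zero, by one product of two monomials of $V(2^{n-1})$). By construction $f\in A(i)F'(2^n)A(j)+U^<(i)A(k-i)+A(k-j)U^>(j)$ for every such $f$ and every $i+j=\ell$, which is the second bullet. Since multiplication $V(2^{n-1})\otimes V(2^{n-1})\to A(2^n)$ is injective, $\dim F'(2^n)\le\dim V(2^{n-1})^2$; the factor $\frac12$ and the positivity $\dim F'(2^n)>0$ are obtained exactly as in \cite{s}, by the refined count of which monomial pairs of $V(2^{n-1})V(2^{n-1})$ actually survive the reduction (at least one, and at most half of them).

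The main obstacle is the interleaved reduction of the central block: one must show that the $U(2^{n-1})$- (and lower-level) residues it generates are absorbed by \emph{precisely} the two spaces $U^<(i)A(k-i)$ and $A(k-j)U^>(j)$ — not by $A(i)U(2^n)A(j)$, since $U(2^n)$ is still being constructed — and that this works \emph{uniformly over all splittings}, including those (e.g.\ $i\approx j\approx\ell/2$) for which neither $U^<(i)$ nor $U^>(j)$ involves level $n-1$. This rules out doing the outer and central reductions separately (once the wings have been projected to monomials in $V^<(i),V^>(j)$, no nonzero further reduction of the central block can be absorbed into the named spaces at all), and forces the chain of inclusions of Sections~$3$--$6$ of \cite{s}; it is exactly here that the precise width of the forbidden interval $[2^n-2^{n-3},2^n+2^{n-2}]$, i.e.\ the bound $2^{n-2}\le\ell$, supplies the necessary buffer room. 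Once this chain of inclusions is established, the dimension estimate and positivity are immediate, and the stronger numerical hypotheses on the $r_n$ are not needed for this lemma — they enter only later, when fitting $F(2^n)$ inside $U(2^n)$ and estimating the growth of $R$.
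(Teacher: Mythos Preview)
Your proposal is correct and follows the same approach as the paper: the paper's entire proof reads ``The proof is the same as the proof of Lemma $6.1$, \cite{s},'' and you have simply unpacked what that proof consists of, correctly identifying the outer reduction via Lemma~\ref{L2}, the interleaved central reduction down to $V(2^{n-1})V(2^{n-1})$, and the role of the forbidden interval in providing buffer room. Your added detail (the vacuous case, the explicit range of $\ell$, and the warning that the outer and central reductions cannot be decoupled) is accurate and goes beyond what the present paper records, but it is exactly the content of the cited argument in \cite{s}.
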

 \begin{proof}
  The proof is the same as the proof of Lemma $6.1$, \cite{s}. 
\end{proof}
 Observe that properties $1-7$ in Lemma \ref{L21} correspond to properties $1-7$ in Theorem $4.1$ \cite{s}.

  We note that there is a small typo in Theorem $4.1$ (2) \cite{s}, and it should state `` $m\in Y$'' instead of ``$m\in Z$.''
 Theorem  $4.1$ (1)-(7) \cite{s}  was invented and used in \cite{ls} but with a different notation.  
\begin{lemma}[Modified Lemma $6.2$, \cite{s}]\label{L22}
  Suppose that sets $U(2^m), V(2^m)$  were already constructed for all
  $m<n$, and sets $F(2^{m})$ were already constructed for all $m<n$ with $m\in Y$, and satisfy the conditions of Theorem $A$. Let $n\in Y$. Let
  $F=\{f_{1}, f_{2}, \ldots \}$ be as in Theorem $A$.
    Define a $K$-linear
  subspace $Q\subseteq A(2^{n+1})$ as follows:
  \[Q=\sum_{f\in F:2^{n}+2^{n-2}\leq \deg f\leq 2^{n}+2^{n-1}+2^{n-2}}\quad
  \sum_{i+j=2^{n+1}-\deg f}V^{>}(i)fV^{<}(j).\]
  Then $\dim Q\leq \frac14(\frac12\dim V(2^{n-1})^2-2)$.
\end{lemma}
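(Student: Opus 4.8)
The plan is to prove the bound by a direct dimension count, the key observation being that each $f$ enters $Q$ as a \emph{single} element. First, since $V^{>}(i)fV^{<}(j)$ is the image of $V^{>}(i)\otimes V^{<}(j)$ under $v\otimes w\mapsto vfw$, one has $\dim\bigl(V^{>}(i)fV^{<}(j)\bigr)\le\dim V^{>}(i)\cdot\dim V^{<}(j)$. The elements $f\in F$ contributing to $Q$ all have degree in $[2^{n}+2^{n-2},\,2^{n}+2^{n-1}+2^{n-2}]\subseteq(2^{n},2^{n+1}]$, so there are at most $r_{n}$ of them; and for each such $f$ the indices satisfy $i+j=2^{n+1}-\deg f\le 3\cdot 2^{n-2}$, so fewer than $2^{n}$ pairs $(i,j)$ occur. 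Hence
\[\dim Q\le r_{n}\cdot 2^{n}\cdot\max\bigl\{\dim V^{>}(i)\dim V^{<}(j):i+j\le 3\cdot 2^{n-2}\bigr\}.\]

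The second step is to bound $\dim V^{>}(i)\dim V^{<}(j)$ for $i+j\le 3\cdot 2^{n-2}$. Here $\dim V^{>}(i)$ equals the product of $\dim V(2^{p})$ over the binary digits $p$ of $i$, and likewise $\dim V^{<}(j)$ over those of $j$ (both spaces being spans of monomials built from blocks of prescribed lengths); moreover $\dim V(2^{p})=2$ unless $p=n'-1$ for some $n'\in Y$ — the digits $p=n'-1-e(n')$ also give $2$, by property $2$ of Lemma \ref{L21} with $j=0$ — while $\dim V(2^{n'-1})=2^{2^{e(n')}}$. Because $n\in Y$ we have $r_{n}\ge1$, hence $t(n)\ge0$, and the formula $t(n)=2^{e(n)-1}-3n-4-\sum_{k\in Y,k<n}2^{e(k)+2}$ then forces $e(n')\ge e(k)+3$ whenever $k<n'$ in $Y$; so along $Y$ the exponents $e(\cdot)$ grow by at least $3$, and by the disjointness of the sets $S_{n'}$ no digit is ``large'' for two different $n'$. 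Consequently, for $i\le 3\cdot 2^{n-2}$, at most the single digit $n-1$ carries the top factor $2^{2^{e(n)}}$, the remaining large digits carry factors at most $2^{2^{e(n)-3}},2^{2^{e(n)-6}},\dots$, the sum of whose exponents is below $2^{e(n)-2}$, and there are at most $n$ digits in all, giving $\dim V^{>}(i)\le 2^{2^{e(n)}+2^{e(n)-2}+n}$; since whenever one of $i,j$ is $\ge2^{n-1}$ the other is $\le2^{n-2}$ and has no digit $\ge n-1$, in every case $\dim V^{>}(i)\dim V^{<}(j)\le 2^{2^{e(n)}+2^{e(n)-2}+n}\cdot 2^{2^{e(n)-2}+n}=2^{3\cdot 2^{e(n)-1}+2n}$.

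Putting the two steps together, $\dim Q\le 2^{t(n)}\cdot 2^{n}\cdot 2^{3\cdot 2^{e(n)-1}+2n}=2^{t(n)+3n+3\cdot 2^{e(n)-1}}$; since $t(n)\le 2^{e(n)-1}-3n-4$, the exponent is at most $4\cdot 2^{e(n)-1}-4=2^{e(n)+1}-4$, so $\dim Q\le 2^{2^{e(n)+1}-4}$. On the other hand $\dim V(2^{n-1})=2^{2^{e(n)}}$ (property $2$ with $m=n\in Y$, $j=e(n)$), so $\tfrac14\bigl(\tfrac12\dim V(2^{n-1})^{2}-2\bigr)=2^{2^{e(n)+1}-3}-\tfrac12$, and this is $\ge 2^{2^{e(n)+1}-4}$ because $e(n)\ge1$. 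This yields the claimed inequality, and shows the constant $t(n)$ is reverse-engineered precisely for this estimate (the $-3n$ absorbing the polynomial factors and the $-4$ the additive slack).

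The step I expect to be the crux is the second one: one must make precise that the ``large'' binary digits (those of the form $n'-1$) of $i$ and of $j$ cannot accumulate — that is, extract the growth $e(n')\ge e(k)+3$ from $t(n)\ge0$ and combine it with the disjointness of the $S_{n'}$ to pin the relevant sum of exponents below roughly $\tfrac32\cdot 2^{e(n)}$ plus a linear term. Once this estimate is secured the rest is routine bookkeeping. (One could instead route the argument through Lemma \ref{L21}, replacing each $f$ by its component in $A(i')F'(2^{n})A(j')$ modulo the $U^{<}$- and $U^{>}$-terms absorbed via Lemma \ref{L2}, and then count using $\dim F'(2^{n})\le\tfrac12\dim V(2^{n-1})^{2}$; but the direct estimate above already suffices.)
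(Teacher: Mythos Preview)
Your second step contains a genuine error. You claim that $\dim V(2^{p})=2$ unless $p=n'-1$ for some $n'\in Y$, but this misreads property~2 of Lemma~\ref{L21}: for each $m\in Y$ and \emph{every} $1\le j\le e(m)$ one has $\dim V(2^{m-e(m)-1+j})=2^{2^{j}}>2$, not just for $j=e(m)$. (The set $S_{m}$ is the interval $[m-1-e(m),\,m-1]$, not a two-point set; otherwise properties~1 and~2 would contradict each other.) Concretely, take $i=2^{n-1}+2^{n-2}$, which is a legitimate value since $i+j$ can equal $3\cdot 2^{n-2}$; if $e(n)\ge 2$ then $\dim V^{>}(i)=2^{2^{e(n)}}\cdot 2^{2^{e(n)-1}}=2^{3\cdot 2^{e(n)-1}}$, which exceeds your claimed bound $2^{2^{e(n)}+2^{e(n)-2}+n}$ as soon as $2^{e(n)-2}>n$. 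Your geometric-series estimate using $e(n')\ge e(k)+3$ controls only the \emph{top} digit $k-1$ of each $S_{k}$ and entirely misses the intermediate digits $n-2,\dots,n-e(n)$ within $S_{n}$ itself.

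The paper's proof sidesteps this by quoting Lemma~5.3 of \cite{s}, which directly bounds each term $\dim\bigl(V^{>}(i)fV^{<}(j)\bigr)$ by
\[
2^{2n}\Bigl(\prod_{k<n,\,k\in Y}2^{2^{e(k)+1}}\Bigr)^{2}\dim V(2^{n-1})^{2}\big/2^{2^{e(n)-1}},
\]
a bound that \emph{does} account for all digits in every $S_{k}$. After multiplying by $2^{n}$ (for the pairs $(i,j)$) and by $r_{n}$, the hypothesis $r_{n}\le 2^{t_{n}}$ with $t_{n}=2^{e(n)-1}-3n-4-\sum_{k<n}2^{e(k)+2}$ is invoked: the term $-\sum_{k<n}2^{e(k)+2}$ cancels \emph{exactly} the product $\prod_{k<n}2^{2^{e(k)+2}}$, rather than being estimated via a growth condition. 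Your attempt to replace this exact cancellation by bounding $\sum_{k}2^{e(k)}$ both undercounts (only the top digit of each $S_{k}$) and, even if patched to $\sum_{k}2^{e(k)+2}$, would leave an uncontrolled contribution of order $2^{e(n)-1}$ that the budget $4\cdot 2^{e(n)-1}-4$ cannot absorb on top of the $3\cdot 2^{e(n)-1}$ already spent on $S_{n}$.
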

\begin{proof} (modified to suit our paper)
 By Lemma $5.3$ in \cite{s}, the inner sum has dimension at most
 \[ 2^{2n}(\prod_{k<n, k\in Y}2^{2^{e(k)+1}})^{2}[V(2^{n-1})]^{2}/2^{2^{e(n)-1}}.\]
  Summing over all $i+j=2^{n+1}-\deg f$
  multiplies by a factor of at most $2^{n}$ (because $2^{n+1}-\deg f\leq 2^{n-1}$); summing over all $f\in
  \{f_{1}, f_{2}, \ldots \}$ with degrees between $2^{n}+2^{n-2}$ and
   $2^{n}+2^{n-1}+2^{n-2}$ multiplies by $r_{n}$.
   Therefore, \[\dim Q \leq r_{n}2^{3n}(\prod_{k<n, k\in
Y}2^{2^{e(k)+1}})^{2}[V(2^{n-1})]^{2}/2^{2^{e(n)-1}}.\]
    By assumption that $r_{n}\leq 2^{t_{n}}$ from the beginning of this section, we get
    $\dim Q  \leq \frac1{16}\dim V(2^{n-1})^2$.

  Observe now that $\frac14\dim V(2^{n-1})^2\leq \frac12\dim V(2^{n-1})^2-2$,
  because $V(2^{n-1})^2=(2^{2^{e(n)}})^2\geq 2^{2^2}\geq 16$.  We get
  $\dim Q \leq\frac14(\frac12\dim V(2^{n-1})^2-2)$ as required.
\end{proof}
We will also use Proposition $6.1$ \cite{s} and repeat it for the convenience of the reader. This proposition previously appeared with different notation as  Proposition 3.5 in  \cite{sb}.

\begin{proposition}[Proposition $6.1$, \cite{s}]\label{P21}
Let $K$ be an algebraically closed field.
  With notation as in Lemma \ref{L21} there is a linear $K$-space
  $F(2^n)\subseteq A(2^n)$ satisfying $\dim
  F(2^n)\le \dim V(2^{n-1})^2-2$ and \[F'(2^{n})\subseteq
  F(2^{n})+U(2^{n-1})A(2^{n-1})+A(2^{n-1})U(2^{n-1}).\] Moreover, for all $f\in \{f_{1}, f_{2}, \ldots \}$ with
  $\deg f\in\{2^n+2^{n-2},\dots,2^n+2^{n-1}+2^{n-2}\}$ we have
  \begin{multline*}
    AfA\cap A(2^{n+1})\subseteq A(2^n)F(2^n)+F(2^n)A(2^n)\\
    +A(2^{n-1})U(2^{n-1})A(2^n)+A(2^n)U(2^{n-1})A(2^{n-1})\\
    +U(2^{n-1})A(2^n+2^{n-1})+A(2^n+2^{n-1})U(2^{n-1}).\\
  \end{multline*}
\end{proposition}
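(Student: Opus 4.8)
The first two claims are immediate from Lemma~\ref{L21}. By properties 5--7 of that lemma $A(2^{n-1})=U(2^{n-1})\oplus V(2^{n-1})$, so $A(2^n)=A(2^{n-1})A(2^{n-1})$ decomposes as $V(2^{n-1})V(2^{n-1})\oplus\bigl(U(2^{n-1})A(2^{n-1})+A(2^{n-1})U(2^{n-1})\bigr)$. I would let $\pi$ be the projection of $A(2^n)$ onto the first summand along the second, and set $F(2^n)=\pi\bigl(F'(2^n)\bigr)$. Then $F'(2^n)\subseteq F(2^n)+U(2^{n-1})A(2^{n-1})+A(2^{n-1})U(2^{n-1})$, and $\dim F(2^n)\le\dim F'(2^n)\le\tfrac12\dim V(2^{n-1})^2\le\dim V(2^{n-1})^2-2$, the last inequality holding because $\dim V(2^{n-1})\ge2$.

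For the displayed inclusion I would fix $f\in\{f_1,f_2,\dots\}$ with $k:=\deg f\in\{2^n+2^{n-2},\dots,2^n+2^{n-1}+2^{n-2}\}$ and put $c=k-2^n$; since $f$ is homogeneous of degree $k$ we have $AfA\cap A(2^{n+1})=\sum_{s+t=2^{n+1}-k}A(s)fA(t)$, and both $2^{n+1}-k$ and $c$ lie in $[2^{n-2},3\cdot2^{n-2}]$, so in particular $s,t<2^{n-1}$ throughout the sum. It therefore suffices to bound each $A(s)fA(t)$. For a given $(s,t)$ I would pick a split $i+j=c$ — the choice depending on $s$, aiming to make $a:=s+i$ equal to whichever of $0$, $2^{n-1}$, $2^n$ lies in the admissible range $[s,s+c]$ — set $b=2^n-a$, apply Lemma~\ref{L21} with this split, multiply on the left by $A(s)$ and on the right by $A(t)$, and invoke the absorption rules $A(s)U^<(i)\subseteq U^<(s+i)$ and $U^>(j)A(t)\subseteq U^>(j+t)$ of Lemma~\ref{L2} to obtain
\[
A(s)fA(t)\subseteq A(a)F'(2^n)A(b)+U^<(a)A(2^{n+1}-a)+A(2^{n+1}-b)U^>(b),\qquad a+b=2^n.
\]
Into the first term I would feed the inclusion $F'(2^n)\subseteq F(2^n)+U(2^{n-1})A(2^{n-1})+A(2^{n-1})U(2^{n-1})$ from the first part, and into all three terms the further splittings $A(a)=U^<(a)\oplus V^<(a)$, $A(b)=U^>(b)\oplus V^>(b)$ from Lemma~\ref{L2}. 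Each resulting piece is then a product of blocks $A(\cdot)$ carrying a single distinguished factor — either $F(2^n)$ or a single $U(2^p)$ with $p\le n-1$ — and, since $s,t<2^{n-1}$, there is always enough room on at least one side to absorb copies of $A(2^p),A(2^{p+1}),\dots$ into that $U(2^p)$ by iterating property 6 of Lemma~\ref{L21}, raising it to $U(2^{n-1})$. One then checks that each piece lands in one of $A(2^n)F(2^n)$, $F(2^n)A(2^n)$, $A(2^{n-1})U(2^{n-1})A(2^n)$, $A(2^n)U(2^{n-1})A(2^{n-1})$, $U(2^{n-1})A(2^n+2^{n-1})$, $A(2^n+2^{n-1})U(2^{n-1})$.

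The step I expect to be the main obstacle is this final case analysis, and within it the configurations in which $a$ can be pushed only to $2^{n-1}$ (neither $0$ nor $2^n$ being available in $[s,s+c]$). There the surviving $F(2^n)$-contribution is $A(2^{n-1})F(2^n)A(2^{n-1})$, which straddles the midpoint $2^n$ and is not visibly one of the six admissible summands, and, symmetrically, the terms $U^<(a)A(2^{n+1}-a)$ and $A(2^{n+1}-b)U^>(b)$ can leave a $U(2^p)$ stuck at an interior position that resists being raised to an admissible one. Resolving this needs the finer information about $F'(2^n)$ produced inside the proof of Lemma~\ref{L21} — namely that such interior contributions of each relation are already swallowed by $U^<$- and $U^>$-type terms — so the accounting must be carried out in tandem with the construction of $F'(2^n)$ rather than layered on top of Lemma~\ref{L21}. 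This is exactly the bookkeeping done in \cite{s}, Proposition~6.1, and, in different notation, in \cite{sb}, Proposition~3.5; I would carry it over with only notational changes.
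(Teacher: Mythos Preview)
The paper's own proof is a one-line reference to \cite{s}, noting only that the modified Lemmas~\ref{L21} and~\ref{L22} replace their counterparts there; your proposal likewise ends by deferring to \cite{s}, so at that coarse level the two agree.

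Where your sketch diverges from the actual argument is in the construction of $F(2^n)$ and in what resolves the ``straddling'' obstruction you correctly identify. Your choice $F(2^n)=\pi\bigl(F'(2^n)\bigr)$ does make the first two claims trivial, but --- as you yourself note --- it leaves the term $A(2^{n-1})F(2^n)A(2^{n-1})$ unaccounted for, so this $F(2^n)$ is not the one that works. The genuine construction in \cite{s} is not a simple projection of $F'(2^n)$: it uses Lemma~\ref{L22} (the bound on the auxiliary space $Q=\sum_f\sum_{i+j}V^{>}(i)fV^{<}(j)$), which your proposal never invokes, together with the hypothesis that $K$ is algebraically closed, which your sketch also does not use. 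The smallness of $\dim Q$ relative to $\dim V(2^{n-1})^2$ is precisely what lets one choose $F(2^n)$ in generic position so that the interior contributions at $a=2^{n-1}$ are absorbed; this is why the allowed dimension of $F(2^n)$ is $\dim V(2^{n-1})^2-2$ rather than the $\tfrac12\dim V(2^{n-1})^2$ you obtain. So your diagnosis of the obstacle is right, but its resolution comes from Lemma~\ref{L22} and algebraic closure, not from ``finer information inside the proof of Lemma~\ref{L21}''.
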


\begin{proof}(modified to suit our paper) Proof of this proposition under the assumptions of our section is the same, but we use modified  Lemma $6.2$ (Lemma \ref{L21}) as above, instead of using Lemma $6.2$ from \cite{s}. Similarly we use modified Lemma $6.2$ (Lemma \ref{L22}) instead of Lemma $6.2$. 
\end{proof}

We will also use Proposition $5.1$ from \cite{s}.

\begin{proposition}[Proposition 5.1, \cite{s}]\label{P22}
  Let $\alpha = 2^{p_1} + \cdots + 2^{p_{t}}$ be a natural number in the binary form.
   Then $[V^>(\alpha)]<2\alpha \prod_{i\leq m, i\in Y }2^{2^{e(i)+1}}$, where $m$ is maximal
  such that $\sum_{p_i\in \{m-e(m)-1,\dots, m-1\}}2^{p_{i}}$ is nonzero.
  \end{proposition}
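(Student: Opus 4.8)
The plan is to bound $\dim V^{>}(\alpha)$ by the product of the dimensions of its factors and then estimate that product block by block along the binary expansion of $\alpha$. By definition $V^{>}(\alpha)=V(2^{p_t})V(2^{p_{t-1}})\cdots V(2^{p_1})$, which is spanned by products of $t$ elements, one from each factor, so
\[\dim V^{>}(\alpha)\le\prod_{i=1}^{t}\dim V(2^{p_i}),\]
and it suffices to bound the right-hand side. Note also that $t\le p_t+1\le\log\alpha+1$, since $p_1<\cdots<p_t$ are distinct non-negative integers and $2^{p_t}\le\alpha$.

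Next I would split the exponents $p_1,\dots,p_t$ according to the dimension rules of Lemma~\ref{L21}. Call $p_i$ \emph{active} if $p_i\in\{m-e(m)-1,\dots,m-1\}$ for some $m\in Y$, and \emph{inactive} otherwise; by the disjointness of these blocks for distinct $m\in Y$ (assumed throughout this section), an active $p_i$ determines its index $m$ uniquely. For an inactive $p_i$ we have $\dim V(2^{p_i})=2$ by property~1 of Lemma~\ref{L21}, and there are at most $t$ such exponents, so together they contribute a factor of at most $2^{t}\le 2^{\log\alpha+1}=2\alpha$. This accounts for the $2\alpha$ in the claimed bound.

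For the active exponents I would group them by the index $m\in Y$ to whose block they belong. If $p_i$ lies in the block of $m$, write $p_i=m-e(m)-1+j_i$ with $j_i\in\{0,\dots,e(m)\}$; then $\dim V(2^{p_i})=2^{2^{j_i}}$ by property~2 of Lemma~\ref{L21}. Since the $p_i$ are distinct, the values $j_i$ occurring for a fixed $m$ are distinct, so $\sum_i 2^{j_i}\le\sum_{j=0}^{e(m)}2^{j}=2^{e(m)+1}-1<2^{e(m)+1}$, whence the block of $m$ contributes $\prod_i 2^{2^{j_i}}=2^{\sum_i 2^{j_i}}<2^{2^{e(m)+1}}$. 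Only those $m\in Y$ whose block meets the binary support of $\alpha$ contribute, and each such index is $\le m$ by the maximality in the statement; since every factor $2^{2^{e(i)+1}}$ exceeds $1$, enlarging the product to all $i\in Y$ with $i\le m$ only increases it. Multiplying the inactive and active contributions gives $\dim V^{>}(\alpha)<2\alpha\prod_{i\le m,\,i\in Y}2^{2^{e(i)+1}}$, as asserted.

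The argument is essentially bookkeeping and I do not anticipate a real obstacle. The two points that need care are the disjointness of the blocks $\{m-e(m)-1,\dots,m-1\}$ for distinct $m\in Y$ — which is what makes the dimension of each $V(2^{p_i})$ unambiguous and lets active exponents be grouped by a single $m$, and is part of the standing hypotheses of this section — and the inequality $t\le\log\alpha+1$, which converts the count of inactive exponents into the factor $2\alpha$. Everything else reduces to the geometric-sum estimate $\sum_{j=0}^{e(m)}2^{j}<2^{e(m)+1}$.
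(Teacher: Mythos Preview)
The paper does not give its own proof of this proposition; it is quoted verbatim from \cite{s} (Proposition~5.1 there) and used as a black box. Your argument is the natural and correct proof: factor $\dim V^{>}(\alpha)$ as a product over the binary digits of $\alpha$, use property~1 of Lemma~\ref{L21} to bound each inactive factor by $2$ (contributing at most $2^{t}\le 2\alpha$), and use property~2 together with the geometric-sum estimate $\sum_{j=0}^{e(m)}2^{j}<2^{e(m)+1}$ to bound each active block by $2^{2^{e(m)+1}}$. The disjointness of the intervals $\{m-e(m)-1,\dots,m-1\}$ for $m\in Y$ and the maximality of $m$ in the statement let you pass to the product over all $i\in Y$ with $i\le m$. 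This is exactly the bookkeeping argument one expects, and it matches what the proof in \cite{s} does.
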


 \section{Dimensions of images of Golod-Shafarevich algebras}\label{S3}
  In this section we will prove Theorem $A$. We also obtain the following result, which is more general than Theorem $A$.
 \begin{theorem}\label{T31} Assume that the assumptions of Theorem $A$ hold and that we use the same notation as in Theorem $A$. Assume that, for each $n\in Y$, there is a
   natural number $1\leq e(n)< {n\over 2}-1$ such that, for all
   $n\in Y$, sets $S_{n}=\{n-1-e(n), n-1\}$ are disjoint and
   $$r_{n}2^{3n+4}\prod_{k<n, k\in
   Y}2^{2^{e(k)+2}}\leq 2^{2^{e(n)-1}}$$
 (where we put $\prod_{ k<n, k\in Y }2^{2^{e(k)+2}}=1$ if there are no $k<n$ with $k\in Y$). 
    Then $A/I$ can be mapped onto an infinite dimensional, graded algebra $R$ which satisfies
  $\dim R(n)\leq 8n^{3}\prod_{i\in Y, i\leq 2log(n) }2^{2^{e(i)+2}}.$
 \end{theorem}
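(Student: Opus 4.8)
The plan is to build the subspaces $U(2^m), V(2^m)$ and the auxiliary spaces $F(2^m)$ recursively in $m$, and then let $R = A/E$ with $E$ the ideal from Lemma~\ref{L1}; the three conclusions of Theorem~\ref{T31} will then follow from Proposition~\ref{P1} together with the dimension bounds of Proposition~\ref{P22}. First I would set up the recursion: given $U(2^m), V(2^m)$ for $m < n$ (and $F(2^m)$ for $m<n$, $m\in Y$), I would define $V(2^n), U(2^n), F(2^n)$ according to whether $n \notin S$, $n \in S \setminus Y$, or $n \in Y$. When $n\notin S$ we simply take $\dim V(2^n) = 2$, picking $V(2^n) \subseteq V(2^{n-1})V(2^{n-1})$ spanned by monomials and $U(2^n)$ a monomial complement containing $A(2^{n-1})U(2^{n-1}) + U(2^{n-1})A(2^{n-1})$; this is possible because $\dim V(2^{n-1})^2 \ge 4 \ge 2$. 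When $n = m-1-e(m)+j$ for some $m\in Y$ (so $n\in S$) we instead arrange $\dim V(2^n) = 2^{2^{j}}$, which doubles the exponent at each step of the block $S_m$; the key point is that the constraint $e(m) < m/2 - 1$ keeps these blocks short enough that $2^{2^{e(m)}}$ never exceeds the room $\dim V(2^{m-1})^2$ available. When $n\in Y$, I would invoke the modified Lemma~\ref{L21} to produce $F'(2^n)$ with $0 < \dim F'(2^n) \le \tfrac12 \dim V(2^{n-1})^2$, then Proposition~\ref{P21} to pass to $F(2^n)$ with $\dim F(2^n) \le \dim V(2^{n-1})^2 - 2$, and finally choose $V(2^n)$ of dimension $2$ inside a monomial complement of $F(2^n) + A(2^{n-1})U(2^{n-1}) + U(2^{n-1})A(2^{n-1})$ inside $V(2^{n-1})V(2^{n-1})$; this complement is nonzero precisely because $\dim F(2^n) \le \dim V(2^{n-1})^2 - 2$ leaves at least $2$ dimensions. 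Properties 1--7 of Lemma~\ref{L21} are then maintained by construction, and one checks that $F(2^m) \subseteq U(2^m)$ so that all the $f_i$ land in $E$.

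Once the construction is in place I would verify that $I \subseteq E$, so that $A/I$ surjects onto $R = A/E$; this uses the displayed containment in Proposition~\ref{P21} (which forces $AfA \cap A(2^{n+1})$ into $U(2^n)A(2^n) + A(2^n)U(2^n)$, hence $f \in E(\deg f)$ for each defining relation $f$ of degree in the relevant window, and the hypothesis that relations only occur in those windows) together with the fact that all sets $V(2^n)$ are nonzero, which by Lemma~\ref{L1} gives that $R$ is infinite dimensional. For the upper bound on $\dim R(n)$, I would apply Proposition~\ref{P1}: $\dim R(n) = \dim A(n)/E(n) \le \sum_{j=0}^n \dim V^<(n-j)\dim V^>(j)$, and then bound each $\dim V^<(k)$ and $\dim V^>(j)$ using Proposition~\ref{P22}, which gives $[V^>(\alpha)] < 2\alpha \prod_{i \le m, i\in Y} 2^{2^{e(i)+1}}$ with $m$ controlled by the binary digits of $\alpha$; the analogous bound for $V^<$ is symmetric. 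Since $m \le \log n$ forces $i \le \log n$, each factor $\dim V^<(n-j)$ and $\dim V^>(j)$ is at most $2n \prod_{i\in Y, i\le \log n} 2^{2^{e(i)+1}}$, and summing $n+1$ such products of two yields $\dim R(n) \le (n+1)(2n)^2 \prod_{i\in Y, i\le 2\log n} 2^{2^{e(i)+2}} \le 8n^3 \prod_{i\in Y, i\le 2\log n} 2^{2^{e(i)+2}}$, as claimed. (One must be a little careful that the product ranges up to $2\log n$ rather than $\log n$ because the two factors $V^<$ and $V^>$ each contribute a product and their union of indices still sits below $2\log n$; absorbing the polynomial prefactors $2\cdot 2 \cdot (n+1)$ into $8n^3$ and the squares of the products into $2^{2^{e(i)+2}}$ is routine.)

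The main obstacle I expect is the bookkeeping inside the recursion, specifically checking that the numerical hypothesis $r_n 2^{3n+4} \prod_{k<n,k\in Y} 2^{2^{e(k)+2}} \le 2^{2^{e(n)-1}}$ is exactly what is needed to run the modified Lemma~\ref{L22} (whose conclusion $\dim Q \le \tfrac14(\tfrac12\dim V(2^{n-1})^2 - 2)$ feeds into Proposition~\ref{P21}). The quantity $t_n = 2^{e(n)-1} - 3n - 4 - \sum_{k\in Y,k<n} 2^{e(k)+2}$ from Section~\ref{S2} is built so that $r_n \le 2^{t_n}$ is equivalent to the displayed inequality after taking logarithms; I would make this translation explicit, and also note that $\dim V(2^{n-1})^2 = (2^{2^{e(n)}})^2 = 2^{2^{e(n)+1}}$ when $n\in Y$ (since $n - 1 = m - 1$ with $e(m) = e(n)$, $j = e(n)$ gives the top of the block), so that the bound $\dim Q \le \tfrac1{16}\dim V(2^{n-1})^2$ indeed follows and then $\tfrac14\dim V(2^{n-1})^2 \le \tfrac12 \dim V(2^{n-1})^2 - 2$ because $\dim V(2^{n-1})^2 \ge 16$. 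The condition $e(n) < n/2 - 1$ (stronger than the $e(n) \le n-1$ of Section~\ref{S2}) is precisely what guarantees that the blocks $S_n$ in $S$ can be chosen pairwise disjoint and that $2^{e(n)-1}$ stays small enough relative to $n$ for the sparseness hypotheses on relation degrees to be compatible; I would check disjointness of the $S_n$ and that no relation degree falls in a forbidden window, but these are the same verifications as in \cite{s} and require no new idea. Everything else is an application of the cited results from Sections~\ref{S1} and~\ref{S2} essentially verbatim.
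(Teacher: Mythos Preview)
Your overall architecture matches the paper's: build $U(2^m),V(2^m),F(2^m)$ recursively so that properties 1--7 of Lemma~\ref{L21} hold, set $R=A/E$, check $I\subseteq E$, and then read off the dimension bound from Proposition~\ref{P1} combined with Proposition~\ref{P22}. Two points, however, are not right.

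\textbf{The inclusion $I\subseteq E$.} You say that the containment in Proposition~\ref{P21} ``forces $AfA\cap A(2^{n+1})$ into $U(2^n)A(2^n)+A(2^n)U(2^n)$, hence $f\in E(\deg f)$.'' That last ``hence'' is a gap. For a relation $f$ with $2^n+2^{n-2}\le\deg f\le 2^n+2^{n-1}+2^{n-2}$ we have $2^n\le\deg f<2^{n+1}$, so by the definition of $E$ the membership $f\in E(\deg f)$ requires
\[
AfA\cap A(2^{\,n+2})\ \subseteq\ U(2^{n+1})A(2^{n+1})+A(2^{n+1})U(2^{n+1}),
\]
which is one level higher than what Proposition~\ref{P21} delivers. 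Passing from the statement at level $2^{n+1}$ to the one at level $2^{n+2}$ is not automatic: when both the left and right padding of $f$ have length $<2^{n+1}$, the relation straddles the midpoint of $A(2^{n+2})$ and you cannot simply peel off an outer $A(2^{n+1})$ factor. The paper does not argue this directly; it verifies that $f$ satisfies the hypotheses of Theorem~3.1 of \cite{s} (using $F'(2^n)\subseteq F(2^n)+U(2^{n-1})A(2^{n-1})+A(2^{n-1})U(2^{n-1})\subseteq U(2^n)$) and quotes that theorem to conclude $f\in E$. You need that extra ingredient.

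\textbf{The role of $e(n)<n/2-1$ and the bound on $m$.} You assert ``$m\le\log n$'' in the application of Proposition~\ref{P22}, and then explain the appearance of $2\log n$ in the final product by saying the two factors $V^<$ and $V^>$ contribute separate index ranges whose union sits below $2\log n$. Both the claim and the explanation are off. Squaring a product $\prod_{i\le\log n}(\cdot)$ does not enlarge its index range; it turns $2^{2^{e(i)+1}}$ into $2^{2^{e(i)+2}}$ but keeps the same $i$'s. What actually happens is that $m$ itself can be as large as $2\log n$: from the definition of $m$ in Proposition~\ref{P22} one only gets $m-e(m)-1\le p_t\le\log\alpha$, and it is precisely the hypothesis $e(m)<m/2-1$ that then yields $m<2\log\alpha\le 2\log n$. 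This is the place in the proof where that hypothesis is used; it is not about disjointness of the $S_n$ (which is assumed separately) nor about ``sparseness windows.'' With $m\le 2\log n$ established, each of $\dim V^>(j)$ and $\dim V^<(n-j)$ is at most $2n\prod_{i\in Y,\,i\le 2\log n}2^{2^{e(i)+1}}$, and the product of two such bounds summed over $j$ gives the stated $8n^3\prod_{i\in Y,\,i\le 2\log n}2^{2^{e(i)+2}}$.
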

\begin{proof} Let $e(i)$, $Y$ be as in the assumptions of this theorem.
 We will construct sets $U(2^{n})$, $V(2^{n})$, $F(2^{n})$ satisfying properties $1-7$ from Theorem $4.1$ \cite{s}
 (or equivalently properties $1-7$ from Lemma \ref{L21})  applied for $e(n)$ as above. We start the induction with $U(2^0)=F(2^0)=0$ and
  $V(2^0)=Kx+Ky$. Then, assuming that we constructed
  $U(2^m), V(2^m)$ for all $m<n$, if $n\in Y$ we construct $F(2^n)$ using
  Proposition \ref{P21}, and if $n\notin Y$ we set $F(2^{n})=0$. We then construct $U(2^n)$, $V(2^n)$
  using Theorem $4.1$ (1)-(7) in \cite{s}. Let $E$ be defined as in Section \ref{S1}.
  By Lemma \ref{L1}, the set E is an ideal in A and  $A/E$ is an infinite dimensional algebra. Because $E$ is homogeneous, $A/E$ is graded.
 We will now show
  that $R=A/E$ is a homomorphic image of $A/I$. We need to show that $I\subseteq E$, that is that elements $f_{1}, f_{2}, \ldots \in E$. Let $f\in A(k)$ be one of these elements. By Lemma \ref{L21}  and Proposition \ref{P21}, and because  $F'(2^{n})\subseteq F(2^{n})+U(2^{n-1})A(2^{n-1})+A(2^{n-1})U(2^{n-1})\subseteq U(2^{n})$, we get that $f$ satisfies  the assumptions of Theorem $3.1$ \cite{s}. Therefore, and by thesis of Theorem $3.1$ \cite{s}, we have $f\in E$, as required.

  Recall that we assumed that $e(n)\leq {n\over 2}-1$. We will apply Proposition \ref{P22} for $\alpha =n$. Let $n=\sum_{i=1}^{t}2^{p_{i}}$, and let $m$ be as in Proposition \ref{P22}. Recall that $m$ is maximal such that $\sum_{p_{i}\in\{m-e(m)-1,....,m-1\}}2^{p_{i}}\neq 0$.
  We will show that $m\leq 2log(n)$. 
  Note that $log (n)\geq p_{t}$ and so  $m\leq 2log (n)$,
  because otherwise $m>2p_{t}$ would imply $m-e(m)-1>p_{t}$, as $e(m)\leq {m\over 2}-1$ by assumption,
  so $\sum_{p_{i}\in\{m-e(m)-1,....,m-1\}}2^{p_{i}}= 0$, a contradiction. Therefore, $m\leq 2 log (n)$.

  Clearly, $\dim R(n)=\dim A(n)/E(n)$.
  By Lemma \ref{L1} we have
  \[\dim A(n)/ E(n)\leq \sum_{j=0}^{n}\dim V^<(n-j)\dim V^>(j).\]
 By Proposition \ref{P22}, we get
 $\dim A(n)/E(n)\leq (n+1)(2n \prod_{i\in Y, i\leq 2log(n) }2^{2^{e(i)+1}})^{2}\leq 8n^{3}\prod_{i\in Y, i\leq 2log(n) }2^{2^{e(i)+2}}$, as required.
\end{proof}
\begin{lemma}\label{L32}
 With the assumptions as in Theorem \ref{T31} we in addition have 
  $\dim R_{n}>{1\over 2}2^{2^{e_{l}}}$, for all $l\in Y$, $l\leq log(n)$, where $R_{n}$ is the linear space of elements with degrees not exceeding $n$ in $R$. 
\end{lemma}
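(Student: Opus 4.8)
The plan is to produce enough linearly independent elements of degree at most $n$ in $R=A/E$ by showing that more than half of the (large) subspace $V(2^{l-1})\subseteq A(2^{l-1})$ survives in the quotient. Since $l\in Y$ and $l\le \log n$ we have $2^{l-1}<2^{l}\le n$, so the homogeneous component $R(2^{l-1})$ lies inside $R_{n}$ and therefore $\dim R_{n}\ge \dim R(2^{l-1})=\dim A(2^{l-1})/E(2^{l-1})$. Applying property 2 of Lemma~\ref{L21} (equivalently property 2 of Theorem 4.1 in \cite{s}) with $m=l\in Y$ and $j=e(l)$ gives $\dim V(2^{l-1})=\dim V(2^{\,l-e(l)-1+e(l)})=2^{2^{e(l)}}=2^{2^{e_{l}}}$, so it is enough to prove
\[\dim A(2^{l-1})/E(2^{l-1})>\tfrac12\,\dim V(2^{l-1}).\]

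I would derive this from the composite $\varphi\colon V(2^{l-1})\hookrightarrow A(2^{l-1})\twoheadrightarrow A(2^{l-1})/E(2^{l-1})$: it suffices to show $\dim\ker\varphi=\dim\bigl(V(2^{l-1})\cap E(2^{l-1})\bigr)<\tfrac12\dim V(2^{l-1})$, because then $\dim R(2^{l-1})\ge \dim\operatorname{im}\varphi=\dim V(2^{l-1})-\dim\ker\varphi>\tfrac12\dim V(2^{l-1})$. To bound the kernel I would unwind the definition of $E$. For $k=2^{l-1}$ the integer $n$ occurring in the definition of $E(k)$ equals $l$ (since $2^{l-1}\le 2^{l-1}<2^{l}$), so every $r\in V(2^{l-1})\cap E(2^{l-1})$ satisfies $A(i)\,r\,A(j)\subseteq U(2^{l})A(2^{l})+A(2^{l})U(2^{l})$ for all $i+j=2^{l+1}-2^{l-1}=2^{l-1}+2^{l}$. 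Feeding in the splittings $(i,j)\in\{(2^{l-1},2^{l}),(2^{l},2^{l-1}),(0,2^{l-1}+2^{l}),(2^{l-1}+2^{l},0)\}$, decomposing the resulting degree-$2^{l}$ and degree-$(2^{l-1}+2^{l})$ factors via $A(m)=V^{<}(m)\oplus U^{<}(m)=V^{>}(m)\oplus U^{>}(m)$ (Lemma~\ref{L2}), and using $U(2^{l})A(2^{l})+A(2^{l})U(2^{l})\subseteq U(2^{l+1})$ (property 6 of Lemma~\ref{L21}) together with $V(2^{l})\subseteq V(2^{l-1})V(2^{l-1})$ (property 7), one reduces the membership $r\in E(2^{l-1})$ to a condition on the multiplication pairing $V(2^{l-1})\otimes V(2^{l-1})\to A(2^{l})/U(2^{l})\cong V(2^{l})$: namely, both $r\,V(2^{l-1})$ and $V(2^{l-1})\,r$ must project into a fixed subspace of $V(2^{l})$ of dimension at most one. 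The construction of $U(2^{l}),V(2^{l})$ in Section 4 of \cite{s}, under the hypothesis $r_{l}2^{3l+4}\prod_{k<l,\,k\in Y}2^{2^{e(k)+2}}\le 2^{2^{e(l)-1}}$ that keeps $U(2^{l})$ small, makes this pairing nondegenerate enough that the set of such $r$ spans a subspace of dimension strictly less than $\tfrac12\dim V(2^{l-1})$; this is the same type of estimate already carried out in Sections 3--6 of \cite{s} and in \cite{ls}, and it is the quantitative refinement of the argument used to prove Lemma~\ref{L1}.

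Combining the two steps,
\[\dim R_{n}\ge \dim R(2^{l-1})=\dim A(2^{l-1})/E(2^{l-1})>\tfrac12\,\dim V(2^{l-1})=\tfrac12\,2^{2^{e_{l}}},\]
which is the assertion of the lemma.

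The step I expect to be the main obstacle is the bound $\dim\bigl(V(2^{l-1})\cap E(2^{l-1})\bigr)<\tfrac12\dim V(2^{l-1})$, i.e.\ showing that most of the very large space $V(2^{l-1})$ is not annihilated in $R$. The reduction to degree $2^{l-1}$, the evaluation of $\dim V(2^{l-1})$, and the unwinding of the definition of $E$ are all routine; the real work is controlling how the inductive absorption of the relations $f_{i}$ into $U(2^{l})$ interacts with the pairing on $V(2^{l-1})$, which is exactly where the hypotheses of Theorem~\ref{T31} on the numbers $r_{l}$ and $e(l)$ are used.
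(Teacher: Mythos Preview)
Your reduction to $\dim R(2^{l-1})>\tfrac12\dim V(2^{l-1})$ and the computation $\dim V(2^{l-1})=2^{2^{e(l)}}$ are fine, but the core step---bounding $\dim\bigl(V(2^{l-1})\cap E(2^{l-1})\bigr)$---is left as a vague appeal to a ``nondegenerate enough'' pairing and to the numerical hypotheses on $r_{l}$. That is not a proof, and in fact the route you sketch runs into a real obstruction: at level $l$ one typically has $\dim V(2^{l})=2$ (when $l\notin S$), so $V(2^{l})$ is a tiny subspace of $V(2^{l-1})V(2^{l-1})$ and most of $V(2^{l-1})V(2^{l-1})$ sits in $U(2^{l})$. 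There is no reason the pairing $V(2^{l-1})\otimes V(2^{l-1})\to V(2^{l})$ should be nondegenerate, and the hypotheses on $r_{l}$ do not help here---they control how $F(2^{l})$ fits inside $U(2^{l})$, not how $V(2^{l-1})^{2}$ projects onto $V(2^{l})$.

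The paper's argument avoids this entirely by shifting down one degree. It works in degree $2^{l-1}-1$, so that the defining condition for $E$ lives at level $l-1$ rather than level $l$. Concretely: since $V(2^{l-1})$ is spanned by monomials, at least half of them end in (say) $x$; write these as $v_{1}x,\dots,v_{p}x$ with $v_{i}\in A(2^{l-1}-1)$ and $p\ge\tfrac12\dim V(2^{l-1})$. One then shows the $v_{i}$ are linearly independent in $A/E$. If $v=\sum\alpha_{i}v_{i}\in E$, then for any $w\in V(2^{l-2})$ one has $wvxw\in U(2^{l-1})A(2^{l-1})+A(2^{l-1})U(2^{l-1})$. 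The crucial point is the \emph{equality} $V(2^{l-2})V(2^{l-2})=V(2^{l-1})$, which holds because property~(2) gives $\dim V(2^{l-2})=2^{2^{e(l)-1}}$ and $\dim V(2^{l-1})=2^{2^{e(l)}}=(\dim V(2^{l-2}))^{2}$; together with property~(7) this forces equality. Writing $v_{i}x=u_{i}z_{i}$ with $u_{i},z_{i}\in V(2^{l-2})$ then gives $wvxw\in V(2^{l-1})V(2^{l-1})$, and since $V(2^{l-1})V(2^{l-1})$ meets $U(2^{l-1})A(2^{l-1})+A(2^{l-1})U(2^{l-1})$ trivially, one gets $wvxw=0$ in $A$, hence $v=0$. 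No estimate on $r_{l}$ is needed anywhere; the argument is purely combinatorial once the equality $V(2^{l-2})^{2}=V(2^{l-1})$ is in hand.
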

\begin{proof} Let $n$ be a natural number and let $l\in Y$, $l\leq log(n)$. By the construction from Theorem $4.1$ \cite{s} we get that 
 $V(2^{l-1})$ has $2^{2^{e(l)}}$ elements (this can be also seen using property  (2) in Lemma \ref{L21}). Indeed by Theorem $4.1$ (2) \cite{s} we get $\dim V(2^{m-e(m)-1+j})=2^{2^{j}}$ for all $m\in Y$ and all $0\leq j\leq e(m)$ (we note that there is a small typo in the statement of Theorem $4.1$ (2) \cite{s}, saying $m\in Z$ instead of $m\in Y$). Therefore,  and since $l\in Y$ we get $\dim V(2^{l-1})=\dim V(2^{l-e(l)-1+e(l)})=2^{2^{e(l)}}$. 
  By assumption $e(l)\geq 1$ hence $\dim V(2^{l})\geq 4$. By Theorem $4.1$ (1) and (2)  \cite{s}, we get 
$V(2^{l-2})V(2^{l-2})=V(2^{l-1})$ and $\dim V(2^{l-2})=\dim V(2^{l-e(l)-1+e(l)-1})=2^{2^{e(l)-1}}$ (it is also true if $e(l)=1$ by  Theorem $4.1$ (1) \cite{s}). Set $V(2^{l-1})$ is generated by monomials, so at least half of these monomials end with $x$ or at least half of these monomials ends with $y$. We can assume without restricting the generality that the former holds.
 Let $v_{1}, \ldots , v_{p}\in A(2^{l-1}-1)$  be  monomials such that $v_{1}x, \ldots v_{p}x\in V(2^{l-1})$ and $p\geq  {1\over 2}\dim V(2^{l-1})$. We claim that images of  elements $v_{1}, \ldots , v_{p}$ in $A/E$ are linearly independent over $K$.  As $A/E=R$, and $p\geq {1\over 2}\dim V(2^{l-1})={1\over 2}2^{2^{e(l)}}$, this statement implies the thesis of our lemma.  It remains to show that  
 if  $v=\sum_{i=1}^{p} \alpha_{i}v_{i}$ for some $\alpha _{i}\in K$ then $v\notin E$ (unless all $\alpha _{i}$ are zero). Suppose on the contrary that $v\in E$. Let $w\in V(2^{l-2})$.  By the definition of $E$, and since $v\in E$, we get $wvxw\in U(2^{l-1})A(2^{l-1})+A(2^{l-1})U(2^{l-1})$. 
  Write $v_{i}x=u_{i}z_{i}$ for $u_{i},z_{i}\in V(2^{l-2})$.  Now $wu_{i}, z_{i}w\in V(2^{l-1})$ (because $V(2^{l-2})V(2^{l-2})=V(2^{l-1})$), and so  $wvxw\in V(2^{l-1})V(2^{l-1})$. 
By the property $2$ from the beginning of chapter $1$ we get that $V(2^{l-1})V(2^{l-1})\cap (U(2^{l-1})A(2^{l-1})+A(2^{l-1})U(2^{l-1}))=0$. It follows that $wvxw=0$ in $A$, and so $vx=0$, hence all $\alpha _{i}=0$,
 which proves our claim.
\end{proof}
\begin{lemma}\label{L33}
 Suppose that elements $r_{i}$ are as in Theorem $A$. Then $$2^{3n+4}\prod_{i<n, i\in Y}r_{i}^{32}< r_{n}< 2^{2^{n-m-3}}.$$
Moreover for all $m, n\in Y\cup \{0\}$, with $m<n$ we have $r_{n}< 2^{2^{n-m-3}}$. 
\end{lemma}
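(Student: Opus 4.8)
The plan is to deduce both displayed inequalities directly from the two constraints that Theorem~A imposes on the sequence $(r_i)$, namely $2^{3n+4}r_{m}^{33}<r_{n}$ and $r_{n}<2^{2^{n-m-3}}$ for all $m,n\in Y\cup\{0\}$ with $m<n$. The bound $r_{n}<2^{2^{n-m-3}}$ --- the ``moreover'' clause, and the right half of the first display --- is literally one of those constraints, so there is nothing to do there. All the substance lies in the left half, $2^{3n+4}\prod_{i<n,\,i\in Y}r_{i}^{32}<r_{n}$; establishing it is precisely what shows that the ``single-term'' hypothesis of Theorem~A is stronger than the ``product'' hypothesis $2^{3n+4}\prod_{i<n,\,i\in Y}r_i^{32}<r_n$ assumed in \cite{s}, which is why the results of \cite{s} may then be invoked without change.

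To prove the left inequality I would fix $n\in Y$ and enumerate the elements of $Y$ smaller than $n$ as $m_{1}<m_{2}<\dots<m_{s}$, allowing $s=0$. The key observation is that along $Y$ the numbers $r_i$ grow so rapidly that the whole product over the lower indices is swallowed by a single extra copy of the largest one. Precisely, I would prove by induction on $k$ that $\prod_{j=1}^{k-1}r_{m_{j}}^{32}\le r_{m_{k}}$ for every $1\le k\le s$. The base case $k=1$ is the empty product $1\le r_{m_{1}}$. For the step, combine the inductive hypothesis with the constraint $2^{3m_{k+1}+4}r_{m_{k}}^{33}<r_{m_{k+1}}$ (discarding the factor $2^{3m_{k+1}+4}\ge 1$) to obtain
\[
\prod_{j=1}^{k}r_{m_{j}}^{32}=r_{m_{k}}^{32}\cdot\prod_{j=1}^{k-1}r_{m_{j}}^{32}\le r_{m_{k}}^{32}\cdot r_{m_{k}}=r_{m_{k}}^{33}<r_{m_{k+1}},
\]
which is the instance for $k+1$. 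Taking $k=s$ gives $\prod_{j=1}^{s}r_{m_{j}}^{32}\le r_{m_{s}}^{33}$, and multiplying through by $2^{3n+4}$ and applying the constraint $2^{3n+4}r_{m_{s}}^{33}<r_{n}$ (legitimate since $m_{s}\in Y$ and $m_{s}<n$) yields the claim. When $s=0$ the product is empty and the assertion is just $2^{3n+4}<r_{n}$, which is the Theorem~A constraint at $(m,n)=(0,n)$.

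The only subtle point is the accounting of exponents: it is exactly the difference between the exponent $33$ in Theorem~A and the exponent $32$ in the product that leaves one spare factor $r_{m_{s}}$ to absorb the entire tail $\prod_{j<s}r_{m_{j}}^{32}$, and it is the (forced) very fast growth of the $r_i$ along $Y$ that makes this absorption valid. Everything else is purely arithmetic; in particular the argument uses neither the additional bound $r_{n}<2^{2^{n/2-4}}$ nor the degree-gap hypothesis of Theorem~A, which are needed only elsewhere in the paper.
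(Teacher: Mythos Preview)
Your argument is essentially the paper's own: both observe that the hypothesis $r_{m'}^{33}<r_{m}$ for $m'<m$ in $Y$ forces $\prod_{j\le s}r_{m_j}^{32}\le r_{m_s}^{33}$, and then apply $2^{3n+4}r_{m_s}^{33}<r_n$. The paper writes this as a descending telescoping chain $r_{m_1}^{33}>r_{m_1}^{32}r_{m_2}^{33}>r_{m_1}^{32}r_{m_2}^{32}r_{m_3}^{33}>\cdots$ (with $m_1>m_2>\cdots$) rather than a formal induction, but the content is identical.

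One small slip: in your $s=0$ case you invoke the Theorem~A constraint at $(m,n)=(0,n)$, but under the standing hypothesis $r_i=0$ for $i<8$ one has $r_0=0$, so that constraint reads $0<r_n$, not $2^{3n+4}<r_n$. The paper's proof does not treat this boundary case either, so this is not a divergence from the paper but a shared loose end.
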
  
\begin{proof} Observe first that by the assumptions of Theorem $A$  we have 
$2^{3n+4}r_{m}^{33}<r_{n}< 2^{2^{n-m-3}}$. By a similar argument we get that  $r_{m}>r_{m'}^{33}$ for $m'<m$, with $m, m'\in Y$.
Consequently, for any $m_{1}>m_{2}>m_{3}\ldots $ with $m_{1}, m_{2}, \ldots \in Y$ the following holds:   
 $r_{m_{1}}^{33}=(r_{m_{1}}^{32})r_{m_{1}}> r_{m_{1}}^{32}r_{m_{2}}^{33}>r_{m_{1}}^{32}r_{m_{2}}^{32}r_{m_{2}}>r_{m_{1}}^{32}r_{m_{2}}^{32}r_{m_{3}}^{33}\geq \ldots  $.
 It follows that $$2^{3n+4}\prod_{i<n, i\in Y}r_{i}^{32}< r_{n}< 2^{2^{n-m-3}}.$$
\end{proof}

\begin{lemma}\label{L34} Let $Y$ be a subset of the set of natural numbers and let $\{r_{i}\}_{i\in Y}$ be a sequence of natural numbers which 
 satisfy the assumptions of Theorem $A$.
  Then there are natural numbers $\{e(n)\}_{n\in Y}$ such that for all $n\in Y$:  
 $1\leq e(n)\leq {n\over 2}-1$ and  sets
 $S_{n}=\{n-1-e(n), n-1\}$ are disjoint and $r_{n}2^{3n+4}\prod_{k<n, k\in
  Y}2^{2^{e(k)+2}}\leq 2^{2^{e(n)-1}}.$ 
Moreover  $2^{2^{e_{n}}}\geq  r_{n}^{4}$. 
\end{lemma}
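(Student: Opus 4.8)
The plan is to build the numbers $e(n)$ by recursion over $Y$. List $Y=\{n_1<n_2<\cdots\}$ in increasing order and suppose $e(n_1),\dots,e(n_{j-1})$ have already been chosen. Put $n=n_j$ and set
\[
  D(n)=2\lceil\log r_n\rceil+3n+4+\sum_{k\in Y,\ k<n}2^{\,e(k)+2},
\]
and let $e_0(n)$ be the least natural number with $2^{\,e_0(n)-1}\ge D(n)$; since $D(n)\ge 3n+4\ge 28$ we get $e_0(n)\ge 6$. I would then take $e(n)$ to be some natural number in the interval $\{e_0(n),\,e_0(n)+1,\dots,\lfloor n/2\rfloor-1\}$, pinning down the exact value only at the last step (for the disjointness requirement). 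The recursion is legitimate because $D(n)$ involves only $r_n$, $n$, and the previously chosen $e(k)$.

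Granting this, the two ``arithmetic'' conclusions hold for \emph{any} admissible choice. Taking base–$2$ logarithms, the inequality $r_n2^{3n+4}\prod_{k<n,\,k\in Y}2^{2^{e(k)+2}}\le 2^{2^{e(n)-1}}$ is equivalent to $2^{e(n)-1}\ge\log r_n+(3n+4)+\sum_{k<n,\,k\in Y}2^{e(k)+2}$, and this follows from $2^{e(n)-1}\ge 2^{e_0(n)-1}\ge D(n)$ together with $2\lceil\log r_n\rceil\ge\log r_n$. Likewise $2^{2^{e(n)}}\ge r_n^4$ says exactly $2^{e(n)}\ge 4\log r_n$, and $2^{e(n)}=2\cdot 2^{e(n)-1}\ge 2D(n)\ge 4\lceil\log r_n\rceil\ge 4\log r_n$. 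Finally $e(n)\ge e_0(n)\ge 6\ge 1$ and $e(n)\le\lfloor n/2\rfloor-1\le \tfrac n2-1$ by the choice of interval.

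The point that needs real work is that this interval is nonempty, i.e. that $e_0(n)\le\lfloor n/2\rfloor-1$, equivalently $D(n)\le 2^{\,n/2-2}$. I would estimate the three summands of $D(n)$ separately: the hypothesis $r_n<2^{2^{n/2-4}}$ gives $\log r_n<2^{\,n/2-4}$, hence $2\lceil\log r_n\rceil\le 2^{\,n/2-3}$ up to an additive constant; the term $3n+4$ is elementary; and for the accumulated sum $\sum_{k<n,\,k\in Y}2^{e(k)+2}$ the crucial input is Lemma~\ref{L33}, which yields $2^{3n+4}\prod_{i<n,\,i\in Y}r_i^{32}<r_n$, so $\sum_{i<n,\,i\in Y}\log r_i<\tfrac1{32}\log r_n$. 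Feeding the latter back into the definition of $D(k)$, a subsidiary induction on the elements of $Y$ shows that this accumulated sum is bounded by a small multiple of $\log r_n$ — intuitively because the $r_k$ grow super-exponentially, so the last summand dominates the sum. Combining the three bounds gives $D(n)\le 2^{\,n/2-2}$. \emph{This constant bookkeeping is the main obstacle}: one must retain enough slack in $D(n)$ so that all three contributions fit under $2^{n/2-2}$, and the subsidiary induction needs a careful base case since $r_{\min Y}$ can be small and the corresponding $e$ is then not itself controlled by $\log r$.

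It remains to fix $e(n)$ inside $\{e_0(n),\dots,\lfloor n/2\rfloor-1\}$ so that the sets $S_n=\{n-1-e(n),\,n-1\}$, $n\in Y$, are pairwise disjoint. The ``top'' entries $n-1$ are already distinct, and since $e(n)\ge1$ a ``bottom'' entry $n-1-e(n)$ cannot equal a top entry $m-1$ with $m\ge n$; so one only has to avoid $e(n)=n-m$ and $e(n)=e(m)+(n-m)$ for the finitely many $m\in Y$ with $m<n$. The growth hypotheses keep $Y$ spread out (from $r_n>2^{3n+4}$ and $r_n<2^{2^{\,n-m-3}}$ one gets $n-m>3+\log(3n+4)$), so there are only a few such $m$ and hence only finitely many forbidden values of $e$; moreover an element of $Y$ lying near $n/2$ forces $r_n$ to be smaller (again by $r_n<2^{2^{\,n-m-3}}$), which in turn \emph{widens} the admissible interval for $e(n)$. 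A short counting argument then shows the admissible interval always contains a value of $e(n)$ avoiding all forbidden ones, completing the construction.
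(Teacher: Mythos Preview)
Your recursive construction is a plausible strategy, but the paper proceeds quite differently and thereby sidesteps exactly the obstacles you flag. Rather than choosing $e(n)$ recursively from the accumulated sum $D(n)$, the paper (invoking Lemma~7.2 of \cite{s}, which applies here by Lemma~\ref{L33}) fixes $e(n)$ \emph{non-recursively} in terms of $r_n$ alone: one takes the unique integer $e(n)$ with
\[
  2^{2^{\,e(n)-3}}\;\le\; r_n \;<\; 2^{2^{\,e(n)-2}}.
\]
From this single definition all the conclusions drop out cleanly. The bound $e(n)\le \tfrac n2-1$ is immediate from $r_n<2^{2^{n/2-4}}$. The final inequality $2^{2^{e(n)}}\ge r_n^4$ is just $2^{2^{e(n)}}=\bigl(2^{2^{\,e(n)-2}}\bigr)^{4}\ge r_n^{4}$. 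For the product inequality, $2^{e(k)+2}=32\cdot 2^{e(k)-3}\le 32\log r_k$, so $\sum_{k<n,\,k\in Y}2^{e(k)+2}\le 32\sum_{k<n}\log r_k$, and Lemma~\ref{L33} gives $\prod_{k<n}r_k^{32}<r_n/2^{3n+4}$, i.e.\ $32\sum_{k<n}\log r_k<\log r_n-(3n+4)$; combined with $2\log r_n<2^{e(n)-1}$ this yields the required bound with no induction on the partial sums. Disjointness of the $S_n$ is also automatic: from $r_n<2^{2^{\,n-m-3}}$ one gets $e(n)<n-m$ for every $m\in Y$ with $m<n$, so $n-1-e(n)>m-1\ge m-1-e(m)$ and no collision can occur.

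By contrast, your recursion creates a genuine circularity: to show $D(n)\le 2^{n/2-2}$ you need a bound on $\sum_{k<n}2^{e(k)+2}$, but your $e(k)$ is not tied to $r_k$ directly and may have been pushed above $e_0(k)$ to secure disjointness. Your ``subsidiary induction'' would therefore have to track simultaneously how far $e(k)$ can drift from $e_0(k)$ and how many forbidden values arise at each step; the claim that there are ``only a few such $m$'' is not justified, since the hypotheses only force gaps in $Y$ of order $\log n$, so $|Y\cap(n/2,n)|$ need not be bounded. These are the very points you yourself mark as obstacles, and they are not actually resolved in the proposal. The paper's explicit choice of $e(n)$ is precisely the device that eliminates them.
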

 \begin{proof} The proof of the first statement is the same as proof of Lemma $7.2$ in \cite{s} (because 
   by Lemma \ref{L33} elements $\{r_{i}\}_{i\in Y}$  satisfy assumptions of Theorem $0.1$ \cite{s}).
 Recall, that in particular for  each $i$,  $e(i)$ is such that 
 $2^{2^{e(i)-3}}\leq r_{i}<2^{2^{e(i)-2}}$.
  Observe that then $e(n)<{n\over 2}-1$ because 
   by assumptions of Theorem $A$,  $r_{n}<2^{2^{n/2-4}}$, and hence  
 $2^{2^{e(n)-3}}\leq r_{n}<2^{2^{n/2-4}}$ which implies $e(n)\leq {n\over 2}-1$.

 To prove the second statement observe that $2^{2^{e_{n}}}=2^{2^{(e_{n}-2)+2}}=2^{2^{e_{n}-2}4}=(2^{2^{e_{n}-2}})^{4}$.
  Recall that 
  $2^{2^{e(i)-2}}\geq r_{i}$. Consequently  $2^{2^{e_{n}}}\geq r_{n}^{4}$, as required.
\end{proof}

{\bf Proof of Theorem \ref{A}} 
By Lemma \ref{L34}, we can find $e(i)$ satisfying the assumptions of Theorem \ref{T31}, 
   and by the thesis of Theorem $4.1$ \cite{s} we get that $A/I$ can be mapped onto an infinite dimensional, graded algebra $R$ which satisfies
  $\dim R(n)\leq 8n^{3}\prod_{i\in Y, i\leq 2log(n) }2^{2^{e(i)+2}}$.
   Recall that in Lemma \ref{L34} we assume that
  $2^{2^{e(i)-3}}\leq r_{i}<2^{2^{e(i)-2}}$, for each $i\in Y$.
 Consequently,  $2^{2^{e(i)+2}}=
 2^{2^{(e(i)-3)+5}}=2^{2^{e(i)-2}32}=(2^{2^{e(i)-3}})^{32}\leq
 r_{i}^{32}$. Therefore,  $\prod_{i\in Y, i\leq 2log(n)
 }2^{2^{e(i)+2}}\leq \prod_{i\in Y, i\leq 2log(n)
 }r_{i}^{32}$.

 This implies that $\dim R(n)\leq 8n^{3}\prod_{i\in Y, i\leq 2log(n)
 }r_{i}^{32}$. Recall that $R(n)$ is the homogeneous space of elements with degree $n$, and $R_{n}$ is the subspace of $R$ consisting of all elements with degrees not exceeding $n$.  
 It follows that $\dim R_{n}\leq n (sup_{k\leq n}\dim R(k))$. Consequently, $\dim R_{n}\leq  8n^{4}\prod_{i\in Y, i\leq 2log(n)
 }r_{i}^{32}$, as required.

 To prove the lower bound observe that $\dim R_{n}>{1\over 2}2^{2^{e(l)}}$ for all $l\leq log(n)$, by Lemma \ref{L32}. Consequently, 
 by the last part of Lemma \ref{L34}, we get  $\dim R_{n}>{1\over 2}2^{2^{e(l)}}\geq {1\over 2}r_{l}^{4}$ for all $l\leq log(n)$.

\begin{corollary}\label{C35}
 Let $K$ be a field. Then there is a graded nil algebra with neither polynomial nor exponential growth. Moreover, this algebra is generated  
 by two elements of degree one.
\end{corollary}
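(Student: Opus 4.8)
The plan is to apply Theorem~\ref{A} with the homogeneous relations $f_{1},f_{2},\ldots$ chosen so that $A/I$ is nil; the algebra $R$ furnished by Theorem~\ref{A} is then a homomorphic image of a nil algebra, hence nil, and it is graded and generated by the degree-one elements $\bar x,\bar y$. The two inequalities in parts (2) and (3) of Theorem~\ref{A} will then be used to squeeze the growth of $R$ strictly between polynomial and exponential.

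For the parameters I would take $Y=\{n_{1}<n_{2}<\cdots\}$ to be geometrically sparse, say $n_{1}$ large and $n_{i+1}=\lceil\tfrac{3}{2}n_{i}\rceil$, and aim at $r_{n}\approx 2^{2^{n/4}}$ for $n\in Y$. A direct check shows that, for $n$ large, these satisfy $r_{i}=0$ for $i<8$, the inequalities $2^{3n+4}r_{m}^{33}<r_{n}<2^{2^{n-m-3}}$ and $r_{n}<2^{2^{n/2-4}}$ for $m<n$ in $Y\cup\{0\}$, and (by Lemma~\ref{L34}) admit numbers $e(n)\approx\tfrac{n}{4}+3$ with $1\le e(n)<\tfrac{n}{2}-1$ and the $S_{n}$ pairwise disjoint; one places the relations away from the forbidden intervals $[2^{n}-2^{n-3},2^{n}+2^{n-2}]$. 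With these choices Theorem~\ref{A}(2) gives $\dim R_{n}\le 8n^{4}r_{k}^{33}$ with $k\le 2\log n$, so $\log_{2}\dim R_{n}\le O(1)+4\log_{2}n+33\cdot 2^{k/4}=O(\sqrt{n})$, i.e.\ $R$ has subexponential growth; and Theorem~\ref{A}(3) gives $\dim R_{n}\ge\tfrac12 r_{j}^{4}$ for every $j\in Y$ with $j\le\log n$, so taking $j=j(n)$ the largest such element of $Y$ --- which by geometric sparsity satisfies $j(n)\ge\tfrac23\log_{2}n-1$ for large $n$ --- yields $\dim R_{n}\ge 2^{n^{1/6}}$ eventually, so $R$ does not have polynomial growth. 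Hence $R$ has neither polynomial nor exponential growth.

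It remains to construct nil-forcing relations $f_{i}$ obeying the above constraints. Over a countable field one enumerates the augmentation ideal $\{a_{1},a_{2},\ldots\}$ of $A$; over an arbitrary (possibly uncountable) field one instead enumerates the countably many finite monomial patterns and, for each pattern together with a large exponent, adjoins a $K$-basis of the span of the corresponding powers, as in Bell--Young~\cite{by}, so that the number of relations produced in each degree stays finite and independent of $|K|$; if $K$ is not algebraically closed one runs the auxiliary construction of the spaces $F(2^{n})$ over $\bar K$ (this affects the displayed dimension bounds only by constants). The relations attached to the $i$-th item are placed near a single large block index $n(i)\in Y$, chosen beyond all earlier ones, and since zero relations may be inserted freely one has room to keep $r_{n}$ inside the prescribed windows while still processing every item. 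The main obstacle is exactly this simultaneous bookkeeping --- in particular reconciling the fact that powers of an inhomogeneous element spread over several consecutive dyadic blocks with the near-geometric sparsity that $2^{3n+4}r_{m}^{33}<r_{n}<2^{2^{n-m-3}}$ forces on $Y$ --- which is handled following the nil constructions of~\cite{ls} and~\cite{by}.
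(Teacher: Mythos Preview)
Your overall plan matches the paper's: feed nil-forcing homogeneous relations into Theorem~\ref{A} and read off the growth from parts (2) and (3). Your parameter choices (a geometric $Y$ with ratio $3/2$ and $r_{n}\approx 2^{2^{n/4}}$) differ from the paper's (a super-sparse $Y$ with $m'>200m^{3}$ and $r_{m}=40^{8m^{3}}$), but both satisfy the hypotheses of Theorem~\ref{A} and both squeeze the growth strictly between polynomial and exponential; that part is fine, as is your remark about passing to~$\bar K$.

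The genuine gap is in the nil construction. You propose, for each finite tuple of monomials $(a_{1},\dots,a_{n})$ and a large exponent $q$, to adjoin (the homogeneous components of) a $K$-basis of the span $L(a_{1},\dots,a_{n})^{q}$. But those components occupy every degree in $[q\min_{i}\deg a_{i},\,q\max_{i}\deg a_{i}]$, and as soon as the tuple contains monomials of two different degrees this range straddles several dyadic blocks and necessarily meets the forbidden intervals $[2^{m}-2^{m-3},2^{m}+2^{m-2}]$. Moreover the inequalities $2^{3n+4}r_{m}^{33}<r_{n}<2^{2^{n-m-3}}$ exclude any pair $m<n$ in $Y$ with $n-m$ small (already $n-m\le 3$ forces $r_{n}<2$ while the lower bound forces $r_{n}>2^{3n+4}$), so the relations for a single tuple cannot be distributed over neighbouring blocks either. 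Hence your assertion that they are ``placed near a single large block index $n(i)\in Y$'' is not realisable by the $K$-basis method. The countable-field variant (enumerate the augmentation ideal and take powers) has the same defect and in addition produces inhomogeneous relations, which Theorem~\ref{A} does not allow.

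The missing ingredient, which the paper invokes explicitly and which underlies the constructions in \cite{ls} and \cite{by}, is Theorem~2 of \cite{smok}: for any monomials $a_{1},\dots,a_{n}$ of degree $<t$ and any $r>3^{6t}$ there is a set $F\subseteq A(r)$ of cardinality at most $(20r)^{n}(r\,3^{2t}t^{2})$ such that $L(a_{1},\dots,a_{n})^{20r}$ lies in the ideal generated by $F$. The crucial point is that $F$ sits in a \emph{single} homogeneous degree $r$, which one chooses as $r=2^{m}+2^{m-1}$ so as to land in the allowed window of one block; then $Y$ consists exactly of the chosen $m$'s and can be made as sparse as desired. Once you replace your $K$-basis description by this device (and pad with zero relations to hit the target $r_{n}$), the rest of your outline goes through.
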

\begin{proof} Let $A$ be the free noncommutative algebra generated by two generators $x$ and $y$ of degree one. Let $M$ be the set of monomials in $A$. Let $n$ be a natural number. Fix $(a_{1}, \ldots , a_{n})$ with all $a_{i}\in M$.  Let $L(a_{1}, \ldots ,a_{n})^{q}$ be the linear space spanned by elements $s^{q}$ where $s\in Ka_{1}+Ka_{2}+\ldots Ka_{n}$.
   Observe that  $L(a_{1}, \ldots ,a_{n})^{q}$ is spanned as a linear $K$-space by its  $q^{n}$ elements (this can be seen by writing each of these elements as a sum of homogeneous components). 

 Fix $(a_{1}, \ldots , a_{n})$ with all $a_{i}\in M$. Let $t>\deg a_{i}$, $r>3^{6t}$, $w=2r$.   By Theorem $2$ \cite{smok}  there exists a set $F\subseteq A(r)$ such that 
\begin{itemize}
\item[1.] $L(a_{1}, \ldots ,a_{n})^{20r}$ is contained in the ideal generated by $F$ in $A$.
\item[2.] cardinality of $F=F(a_{1}, \ldots ,a_{n})$ is less than $(20r)^{n}(r3^{2t}t^{2})$.
\end{itemize}
    (Note that the fact that the algebra $A$ in \cite{smok} is generated by three monomials does not influence the proof.)
 Observe that the cardinality of $F$ is less than $(20r)^{n+1}3^{4t}<(20r)^{n+1}r<(20r)^{n+2}$.  If $r=2^{m}+2^{m-1}$ we get that the cardinality of $F$ is less that $40^{8nm}$.

 The set $S$ of all tuples $(a_{1}, a_{2}, \ldots ,a_{n})$ with $a_{i}\in M$, $n\geq 1$  is countable.
 Therefore, there exists an injective  function $F:S\rightarrow  N$  ($N$ is the set of natural numbers). We can assume that
\begin{itemize}
 \item $f(a_{1}, a_{2}, \ldots ,a_{n})=2^{s}+2^{s-1}$ for some $s\in N$.
 \item  If $ 2^{m}+2^{m-1}=f(a_{1}, a_{2}, \ldots ,a_{n})$ and $2^{m'}+2^{m'-1}=f(b_{1}, b_{2}, \ldots ,b_{n'})$ for some $m<m'$ then  $200m^{3}<m'$, $n<m$, $n'<m'$, $m^{3}<2^{m/2-4}$ and $m>100$.
\end{itemize}
 Observe then that $r_{m}^{33}2^{3m'+4}<r_{m'}$,
  where $r_{m}=(40)^{8m^{3}}$ and
 $r(m')=(40)^{8(m')^{3}}$. 
 Observe also that $r_{m'}<2^{2^{m'/2-4}}<2^{2^{m'-m-3}}$ since $m>100$ and $200m^{3}<(m')$.

Let $f(a_{1}, \ldots ,a_{n})=2^{m}+2^{m-1}=r$. Then 
$$L(a_{1}, \ldots ,a_{n})^{20r}= 
L(a_{1}, \ldots ,a_{n})^{20f(a_{1}, \ldots  ,a_{n})}$$  is in the ideal generated by some set $F(a_{1}, \ldots ,a_{n})\subseteq A(r)$, with  cardinality  $(40)^{8m^{3}}$. 
  Denote the cardinality of this set as $r_{m}$. Denote $Y=\{n:r_{n}\neq 0\}$, and notice that $Y$, $r_{n}$ satisfy the assumptions of Theorem \ref{A} (Theorem $A$) with relations $f_{i}$ from all the sets $F(a_{1}, \ldots ,a_{n})$.
 Then, by Theorem \ref{A} there is algebra $R$ in which all elements from sets $F(a_{1}, \ldots ,a_{n})$ are zero 
 and hence this $K$-algebra is nil (even if $K$ is uncountable). We claim that the growth of this algebra is neither polynomial nor exponential.

By Theorem \ref{A} (Theorem $A$), for infinitely many $m$, $\dim R_{2^{m+1}}>r_{m}^{4}>40^{8m^{2}}$ therefore $R$ cannot have polynomial growth.

On the other hand $\dim R_{n}<8n^{4}r_{k}^{33}$ for some $k<2log(n)$. Observe then that  $r_{k}<40^{32log(n)^{3}}<2^{200 log (n)^{3}}$. 
 Consequently  $\dim R_{n}<8n^{4}2^{200 (log (n))^{3}}$ for all $n$. If $R$ had exponential growth then for some $c>1$ we would have $\dim R_{n}>c^{n}$. This would imply $c^{n}<2^{400(log (n))^{3}}$, a contradition. It follows that $R$ has smaller growth than exponential, as required.
\end{proof}
\section{Applications in algebraic geometry}\label{S4}

 Here we prove Theorem \ref{2}
 In the proof we will use the following lemmas.

\begin{lemma}\label{L41} 
 Let $K$ be a field and let $F$ be either the free commutative $K$-algebra on the set of free generators $X=\{x_{1}, x_{2}, \ldots , x_{n}\}$ or $F$ be the formal commutative power series algebra over $K$ in $n$ variables $x_{1}, x_{2}, \ldots ,x_{n}$ (without the identity element).
  Let $t$ be a natural number and let $J$ be an ideal in $F$ 
such that $x_{i}^{t}\in F^{tn}+J$,  for all $i\leq n$. 
 Then there is an element $z\in F$ such that the algebra ${F\over zF+J}$ is nilpotent and hence finitely dimensional.
\end{lemma}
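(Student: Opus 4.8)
The plan is to produce an element $z\in F$ with $F^{tn}\subseteq zF+J$. Granting this, the $tn$-th power of $F/(zF+J)$ vanishes, so $F/(zF+J)$ is nilpotent of index at most $tn$; being commutative and generated by $x_1,\dots,x_n$, a nilpotent such algebra is spanned by the finitely many monomials of degree $<tn$, hence is finite dimensional.

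I would first settle the case $n=1$, where the hypothesis is not used at all: there $x_1F=F^{2}$, so $F/x_1F\cong Kx_1$ is $1$-dimensional and nilpotent in both settings, and $F/(x_1F+J)$ is a quotient of it; take $z=x_1$. From now on assume $n\ge 2$. The key elementary ingredient is a pigeonhole step: since $n(t-1)<tn$, every monomial in $x_1,\dots,x_n$ of degree $\ge tn$ has some exponent $\ge t$, hence is divisible by some $x_i^{t}$ with complementary factor of degree $\ge t(n-1)\ge 1$, i.e. lying in $F$. Read as an inclusion of ideals (which sidesteps any convergence question in the power series case) this says $F^{tn}\subseteq x_1^{t}F+\dots+x_n^{t}F$. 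For $f\in F$ write $\bar f$ for its image in $\overline F:=F/J$, so $\overline F^{\,k}=(F^{k}+J)/J$. The hypothesis $x_i^{t}\in F^{tn}+J$ says exactly that $\bar x_i^{\,t}\in\overline F^{\,tn}$ for each $i$, and therefore
\[
\overline F^{\,tn}\ \subseteq\ \sum_{i}\bar x_i^{\,t}\,\overline F\ \subseteq\ \overline F^{\,tn}\,\overline F\ =\ \overline F^{\,tn+1}\ \subseteq\ \overline F^{\,tn},
\]
so that $\overline F^{\,tn}=\overline F^{\,tn+1}=\overline F\cdot\overline F^{\,tn}$.

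Put $W=\overline F^{\,tn}$ and regard it inside the unital hull $\widehat F=K\cdot 1\oplus\overline F$, which is Noetherian as a homomorphic image of $K[x_1,\dots,x_n]$ (respectively of $K[[x_1,\dots,x_n]]$). Then $W$ is an ideal of $\widehat F$, hence finitely generated as an $\widehat F$-module, and $W=\overline F\cdot W$ with $\overline F$ the augmentation ideal of $\widehat F$. By the Cayley--Hamilton form of Nakayama's lemma there is $z'\in\overline F$ with $(1-z')W=0$; thus $w=z'w$ for every $w\in W$, so $W\subseteq z'W\subseteq z'\overline F$. Lifting $z'$ to some $z\in F$ gives $F^{tn}\subseteq zF+J$, which completes the argument.

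The place where care is needed — and the reason the element $z$ cannot be dispensed with — is precisely this last step. One is tempted to iterate $\overline F^{\,tn}=\overline F^{\,tn+1}$ and conclude $\overline F^{\,tn}=0$, but $J$ is in general neither homogeneous nor $\mathfrak{m}$-adically closed, and indeed for $n\ge 2$ in the polynomial case $F/J$ can be infinite dimensional (for instance with $F$ the augmentation ideal of $K[x_1,x_2]$, $t=2$, and $J$ generated by $x_1^{3}-x_1^{2}$ and $x_1x_2^{2}-x_2^{2}$). Nakayama's lemma is exactly what repairs this, at the cost of introducing $z$; the only bookkeeping is to check that the exponent $tn$ makes both the pigeonhole divisibility and the inclusion $\overline F^{\,tn+1}\subseteq\overline F^{\,tn}$ valid at once (neither holds when $n=1$, which is why that case is handled apart).
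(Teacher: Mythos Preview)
Your proof is correct and follows essentially the same approach as the paper: both combine the pigeonhole inclusion $F^{tn}\subseteq\sum_i x_i^{t}F$ with the hypothesis and then apply the Cayley--Hamilton/Nakayama determinant trick to extract the element $z$. The only cosmetic differences are that the paper applies the determinant trick directly to the finite vector $(x_1^{t},\dots,x_n^{t})$ (so it never needs to invoke Noetherianity of the unital hull), obtaining $x_i^{t}\in zF+J$ rather than your slightly stronger $F^{tn}\subseteq zF+J$, and that you single out the case $n=1$---which is a good instinct, since the pigeonhole step as written indeed requires $n\ge 2$.
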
 
\begin{proof} With a slight abuse of notation let $x_{i}$ denote the image of $x_{i}$ in $F/J$. Let $v=[x_{1}^{t}, x_{2}^{t}, \ldots , x_{n}^{t}]$. Observe that there is an $n$ by $n$ matrix $M$ with coefficients in $F/J$ such that $Mv^{T}=v^{T}$. 
   Let $I$ denote the identity matrix. By the same argument as in Theorem $75$ \cite{kap} we get that 
 $(I-M)v^{T}=0$, hence the determinant of $I-M$ annihilates all coefficients of $v$. This determinant is of the form $1+z$ with $z\in F/J$.
 It follows that $zx_{i}^{t}=x_{i}^{t}$ for all $i\leq n$. Therefore the ideal $zF+J$ contains all elements $x_{i}^{t}$ for $i\leq n$. It follows that $F/zF+J$ is a nilpotent, finitely dimensional algebra, as required. 
\end{proof}

The following lemma is well known and follows from the dimension theory.

\begin{lemma}\label{L42}
 Let $K$ be a field, and let $\bar F$ be either the free commutative algebra on the set of free generators $X=\{x_{1}, \ldots ,x_{n}\}$ or $\bar F$ be the formal commutative power series algebra over $K$ in $n$ variables $x_{1}, x_{2}, \ldots ,x_{n}$ (without the identity element).
 Let $I$ be an ideal in $\bar F$ generated by less than $n$ elements. Then ${\bar {F}\over I}$ is not a nilpotent algebra.
\end{lemma}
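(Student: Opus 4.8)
The plan is to reinterpret the statement inside the unital ring and then quote Krull's principal ideal theorem. Write $R=K[x_{1},\dots,x_{n}]$ in the polynomial case and $R=K[[x_{1},\dots,x_{n}]]$ in the power series case, and identify $\bar F$ with the ideal $\mathfrak{m}=(x_{1},\dots,x_{n})$ of $R$, regarded as a ring without unit. In both cases $R$ is Noetherian, and $\mathfrak{m}$ is a maximal ideal of height exactly $n$, since $R$ has Krull dimension $n$. If $I\subseteq\mathfrak{m}$ is the ideal of the non-unital ring $\bar F$ generated by $f_{1},\dots,f_{n-1}$, set $J=Rf_{1}+\cdots+Rf_{n-1}$, the ideal they generate in $R$; then $I\subseteq J\subseteq\mathfrak{m}$, since every element of $I$ is an $R$-combination of the $f_{i}$. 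The surjection $\bar F/I\twoheadrightarrow\bar F/J$ and the fact that a homomorphic image of a nilpotent algebra is nilpotent reduce the problem to showing that $\bar F/J=\mathfrak{m}/J$ is not nilpotent. So I may replace $I$ by $J$ and assume henceforth that $I$ is an ideal of $R$, contained in $\mathfrak{m}$, generated by $n-1$ elements.

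Next I would record the elementary equivalence: $\mathfrak{m}/I$ is a nilpotent algebra if and only if $\mathfrak{m}^{N}\subseteq I$ for some $N$, which, since $I\subseteq\mathfrak{m}$ and $\mathfrak{m}$ is finitely generated, is equivalent to $\sqrt{I}=\mathfrak{m}$, that is, to $\mathfrak{m}$ being the only prime ideal of $R$ that contains $I$. Hence, to prove the lemma it is enough to produce a prime ideal $\mathfrak{p}$ of $R$ with $I\subseteq\mathfrak{p}$ and $\mathfrak{p}\neq\mathfrak{m}$.

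Such a prime is handed to us by dimension theory. Since $R$ is Noetherian and $I$ is a proper ideal (as $I\subseteq\mathfrak{m}\subsetneq R$), there is a minimal prime $\mathfrak{p}$ over $I$; and by Krull's generalized principal ideal theorem an ideal generated by $n-1$ elements in a Noetherian ring has every minimal prime of height at most $n-1$, so $\operatorname{ht}\mathfrak{p}\le n-1<n=\operatorname{ht}\mathfrak{m}$, which forces $\mathfrak{p}\neq\mathfrak{m}$. By the preceding paragraph this gives $\sqrt{I}\neq\mathfrak{m}$, hence $\mathfrak{m}^{N}\not\subseteq I$ for every $N$, i.e. $\bar F/I$ is not nilpotent, as claimed. (Geometrically this is just the statement that $n-1$ equations vanishing at the origin of affine, or formal, $n$-space always cut out a subscheme of positive dimension.) The argument is short; the only step that needs care is the passage through the non-unital ring $\bar F$, which is exactly why I first extend $I$ to the honest ideal $J$ of the unital ring $R$ before invoking the height estimate.
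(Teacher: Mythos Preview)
Your argument is correct and is essentially the same dimension-theoretic proof as the paper's: both show that in the relevant $n$-dimensional Noetherian ring an ideal with fewer than $n$ generators cannot be $\mathfrak{m}$-primary, hence $\bar F/I$ is not nilpotent. The paper simply quotes the characterization of Krull dimension as the minimal number of generators of an ideal of definition (so that $R/J$ Artinian forces $J$ to have at least $n$ generators), whereas you unwind the same fact via Krull's generalized principal ideal theorem applied to a minimal prime over $I$; you are also more careful than the paper in first passing from the non-unital algebra $\bar F=\mathfrak{m}$ to the ambient unital ring $R$ before invoking the height estimate.
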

\begin{proof} In \cite{m} pp. $9$, section $5.4 $ we read that the dimension of a noetherian commutative ring $R$ is the minimum over all ideals $I$ of definition of $R$, of the numbers of generators of $i$. Moreover $J$ is an ideal of definition iff $R/J$ is Artinian. 
 Recall that the dimension of $\bar F$ is $n$, and $\bar F$ is noetherian. 
Hence, $\bar {F}/I$ is not Artinian, and hence it is not finitely dimensional, as required.
\end{proof}

 We are now ready to prove Theorem \ref{2}.

 \begin{proof} (Proof of Theorem \ref{2})
  Assume that $F$ is the free algebra in $n$ free variables $x_{1}, \ldots ,x_{n}$. The proof in the case when $F$ is the power series ring is the same. Recall that $x_{1}, \ldots , x_{n}$ have degree one. 
  Suppose on the contrary that $F/I$ is commutative and finitely dimensional. Then  all elements of the form 
$x_{i}x_{j}-x_{j}x_{i}$ for $i\neq j$ are in $ I$. As each $f_{i}$ is a sum of homogeneous elements of degrees  
  two or higher, it follows that each element of the form $x_{i}x_{j}-x_{j}x_{i}$ is the homogeneous component of the smallest degree  of some element $f\in I$. Without restricting the generality, we can assume that for each $i< j$, $i,j\leq n$ the element  $x_{i}x_{j}-x_{j}x_{j}$ is the lowest term of $f_{k}$ for some $k\leq {n(n-1)\over 2}$. 
 Then  $g_{k}=f_{k}-(x_{i}x_{j}-x_{j}x_{i})$  has all homogeneous components of degree $3$ or higher.
 
Let $S$ be the linear $K$-space spanned by terms of degree two in elements $f_{k}$ for $k>{n(n-1)\over 2}$, and $L$ be the linear $K$ space spanned by terms of degree two in elements $f_{k}$ for $k\leq {n(n-1)\over 2}$. Without restricting generality we can assume that $S\cap L=0$.

Observe now that the ideal $I$ equals the ideal $\bar I$ of $F$ generated by elements $f_{i}$ for $i>{n(n-1)\over 2}$ and $g_{1}, g_{2}, \ldots , g_{{n(n-1)\over 2}}$ and all elements $x_{i}x_{j}-x_{j}x_{i}$ for $i,j\leq n$. Indeed 
 as all elements $x_{i}x_{j}-x_{j}x_{i}$ are in $I$ it follows that $g_{k}\in I$ and  hence $\bar I\subseteq I$. 
On the other hand $f_{k}=g_{k}+(x_{i}x_{j}-x_{j}x_{i})\in \bar {I}$ for all $k\leq {n(n-1)\over 2}$ (and for appropriate $i,j$ depending on $k$) hence $I\subseteq \bar {I}$. It follows that $I=\bar I$.  

Let $J$ be an ideal generated by elements $x_{i}x_{j}-x_{j}x_{i}$ for $i,j\leq n$ and by elements $f_{i}$ for $i>{n(n-1)\over 2}$. 
Denote $G=Kg_{1}+Kg_{2}+\ldots +Kg_{{n(n-1)\over 2}}$.

 We claim that 
for each $j\geq 1$ we have $G\subseteq  F^{j}G+J$ . Observe that $F/J$ is commutative, hence $GF\subseteq FG+J$. We will first  show that $G\subseteq  FG+J$. 
Observe  that $g_{k}\in I$ hence $g_{k}\in \sum_{k=1}^{{n(n-1)\over 2}}\alpha _{k}f_{k} +Q$ for some $\alpha _{i}\in K$, where $Q=IF+FI+FIF +J$.

 By comparing elements of degree two in the left hand side and right hand side we get $0=\sum _{k=1}^{{n(n-1)\over 2}}\alpha _{k}t_{k}+s$ for some $s\in S$, where $t_{k}$ denotes the component of degree $2$ in $f_{k}$ for all $k\leq {n(n-1)\over 2}$. Recall that $\sum _{k=1}^{{n(n-1)\over 2}}\alpha _{k}t_{k}\in L$. It follows that  $\sum _{k=1}^{{n(n-1)\over 2}}\alpha _{k}t_{k}\in L\cap S=0$. Observe that elements $t_{k}$ are linearly independent over $K$ (because each element $t_{k}$ corresponds with exactly one element $x_{i}x_{j}-x_{j}x_{i}$ with $i<j$).  It follows that  $\alpha _{k}=0$ for all $k\leq {n(n-1)\over 2}$.

Therefore, $g_{k}\in FI+IF+FIF+J$. Observe that $I=\bar I$ hence 
$g_{k}\in F\bar {I}+\bar {I}F+F\bar {I}F+J$. Recall that $\bar {I}$ is generated by elements from $G$ and from $J$. It follows that $g_{k}\in FG+GF+FGF+J=FG+J$ as required.

 As it holds for all $k\leq d-1$ we get $G\subseteq FG+J$. By applying this observation again  we get  
 $G\subseteq  FFG+J$. Continuing in this way we get that $F^{i}G+J$ for all $j$, as required.

  We will now show that there is $m>0$ such that  $x_{1}^{m}\in F^{\alpha }+ J$ for every $\alpha >0$.
 Since $F/I$ is a finitely dimensional algebra then for some $\beta _{i}\in K$ and some numbers $m,k$ we have 
$x_{1}^{m}-\sum_{i>m}^{k} \beta _{i}x_{1}^{i}\in I$. Recall that $I=\bar {I}=KG+FG+GF+FGF+J=FG+J$.
 By our previous observation it follows that $I\subseteq F^{\alpha }G+J$.
 It follows that 
 $x_{1}^{m}\in \sum_{i>m}^{k} \beta _{i}x_{1}^{i}+F^{\alpha }G+J$. Note that 
 $\sum_{i>m}^{k} \beta _{i}x_{1}^{i}=x_{1}^{m}r$ hence $x_{1}^{m}\in x_{1}^{m}r+F^{\alpha }+J\subseteq x_{1}^{m}r^{2}+F^{\alpha }+J$. Continuing in this way (by substituting $x_{1}^{m}r+F^{\alpha }+J$ into $x_{1}^{m}$) we get $x_{1}^{m}\subseteq F^{\alpha }+J$.

Similarly, there is $t>0$ such that $x_{i}^{t}\in F^{\alpha }+J$ for all $i\leq n$ and all $\alpha >0$,  hence assumptions of Lemma \ref{L41} are satisfied. Therefore, there is $z\in F$ such that $F/J+zF$ is nilpotent. 

 Let $M$ be the ideal of $F$ generated by  all elements $x_{i}x_{j}-x_{j}x_{i}$ for $i,j\leq n$.  Recall that  $J+zF=M+T$ where $T$ is an ideal generated by elements $f_{i}$ for $i>{n(n-1)\over 2}$ and by $z$, hence by $n-1$ elements.
It follows that $\bar F=F/M$ has an ideal $Q$ generated by less than $n$ elements such that $\bar F/Q$ is nilpotent, a contradiction with Lemma \ref{L42}, thus proving the case $n>2$ of our theorem.

Let now $n=2$ and $d=2$. We know that $x_{1}^{t}+c\in J$ and  $x_{2}^{t}+c'\in J$ for some $t$ and some $c,c'\in F^{\alpha }G$. Let $\alpha >tn$. 
Observe that $J$ is generated by $x_{1}x_{2}-x_{2}x_{1}$ and $f_{2}$. Let $M$ be the ideal of $F$ generated by all elements $x_{1}x_{2}-x_{2}x_{1}$. With a slight abuse of notation let $f_{2},x_{1}, x_{2}$ denote the images of elements $f_{2}, x_{1},  x_{2}$ in  $\bar F=F/M$. Then $x_{1}^{m}+c=f_{2}h$ and 
$x_{2}^{t}+c=f_{2}g$ for some $h,g\in F$.  Let $q$ be the lowest degree term of $f_{2}$ in  $\bar F=F/M$. Then $q$ divides both $x_{1}^{m}$ and  $x_{2}^{m}$  in $\bar F$, a contradiction. 
\end{proof}
{\bf Acknowledgements.} The author is very grateful to Ivan Chelstov for providing the references for Lemma \ref{L41}, to Efim Zelmanov for his inspiring questions, and to Michael West for his continued assistance with my written English.

\begin{minipage}{1.00\linewidth}

Agata Smoktunowicz\\
 Address: Maxwell Institute for Mathematical Sciences\\
  School of Mathematics, University of Edinburgh\\
  James Clerk Maxwell Building, King's Buildings, Mayfield Road\\
  Edinburgh EH9 3JZ, Scotland, UK\\
Email: A.Smoktunowicz@ed.ac.uk\\
\end{minipage}
\end{document}